\newtheorem{thm}{Theorem}
\newtheorem{lem}{Lemma}
\newtheorem{cor}{Corollary}
\theoremstyle{remark}
\newtheorem{rem}{Remark}
\newcommand{\Q}{\mathbb{Q}}
\newcommand{\Z}{\mathbb{Z}}
\newcommand{\mycomment}[1]{}
\newcommand{\email}[1]{\protect\href{mailto:#1}{#1}}
\title{Forbidden multipliers in abelian difference sets}
\author{Niklas Miller\thanks{Department of Mathematics and Systems Analysis, Aalto University, Finland, \email{niklas.miller@aalto.fi}. This work has been supported by the Research Council of Finland (Grant \#351271, PI~Camilla~Hollanti).}}
\date{\today}
\begin{document}
\maketitle
\begin{abstract}
    We make the observation that certain group automorphisms that fix a large subgroup of an abelian group cannot be multipliers in any non-trivial abelian difference sets, with the single exception of an involution that can be a multiplier in Hadamard difference sets, provided that the difference set contains a sub-difference set of the same type. We use this observation together with a multiplier theorem to rule out the existence of difference sets, and derive bounds for the numerical multiplier group of a difference set.
\end{abstract}
\smallskip
\noindent \qquad \ \textbf{\small{Key words.}} Difference set, multiplier group, multiplier theorem.

\smallskip
\noindent \qquad \ \textbf{\small{MSC Codes.}} 05B10.

\section{Introduction}

A $(v,k,\lambda)$ difference set $D$ is a $k$-subset of an abelian group $G$ of order $v$, such that every non-identity element of $G$ is represented as a difference of elements of $D$ in exactly $\lambda$ ways. A simple count gives the equation
\begin{align}
\label{eq:fundamental}
    k(k-1)=\lambda(v-1).
\end{align}
Let $n=k-\lambda$ be the order of $D$; we assume that $k<\frac{v}{2}$ and restrict attention to non-trivial difference sets with $n>1$. One has the inequalities 
\begin{align}\label{eq:basic}
4n-1\leq v\leq n^2+n+1,    
\end{align}
where the lower bound is attained for $(4n-1,2n-1,n-1)$ Paley--Hadamard difference sets, \emph{i.e.}, the set of non-zero quadratic residues in the additive group of a finite field of prime power order $q=4n-1$, and the upper bound is attained for $(n^2+n+1,n+1,1)$ finite projective planes, which are conjectured to exist only for $n$ a prime power. All groups considered in this article are abelian.

Difference sets correspond to symmetric designs with regular abelian automorphism groups and are thus of interest in combinatorial design theory. They have applications in signal design in digital communications, synchronization and radar.

The main result of this article is a theorem, which restricts the multiplier groups of abelian difference sets in groups satisfying certain conditions. Recall that Hadamard difference sets have parameters $(4u^2,2u^2-u,u^2-u)$ for some positive integer $u$. Their $\pm 1$ incidence matrices correspond to regular Hadamard matrices. We denote by $C_k$ the cyclic group of order $k$.

\begin{thm}
\label{thm:forbidden}
Let $G=C_{p^e}\times H$ where $e\geq 2$ and $H$ is an abelian group whose Sylow $p$-subgroup has exponent less than $p^e$. Suppose that $t$ is an integer such that $t\equiv 1+p^{e-1}\pmod{p^{e}}$ and $t\equiv 1\pmod{\text{exp}(H)}$. Then $t$ is not a multiplier in any non-trivial $(v,k,\lambda)$ difference set $D$ in $G$, with the possible exception $p=2$ and $D$ is a Hadamard difference set in a non-cyclic group.
\end{thm}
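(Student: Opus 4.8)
The plan is to read the hypothesis on $t$ as a statement about the automorphism $\sigma\colon g\mapsto tg$ of $G$ and to exploit its very rigid shape. Writing $C_{p^e}=\langle x\rangle$, the congruences $t\equiv 1+p^{e-1}\pmod{p^{e}}$ and $t\equiv 1\pmod{\exp(H)}$ say that $\sigma$ fixes $H$ pointwise and sends $x\mapsto(1+p^{e-1})x$. Consequently $\sigma$ fixes pointwise the index-$p$ subgroup $K=pC_{p^e}\times H$, and the map $f=\sigma-\mathrm{id}\colon g\mapsto\sigma(g)-g$ is a homomorphism onto the socle $N=p^{e-1}C_{p^e}\cong C_p$ with kernel exactly $K$. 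A short computation using $e\geq 2$ shows $\sigma$ has order $p$, which equals $2$ precisely when $p=2$; this is the source of the exceptional case. First I would invoke the standard fact that a multiplier fixes some translate of an abelian difference set, so after translating we may assume $\sigma(D)=D$. Then $D$ is a union of $\sigma$-orbits: the orbits inside $K$ are fixed points, while each orbit meeting $G\setminus K$ is a full coset of $N$; in particular $D\setminus K$ is a union of $N$-cosets.

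The engine of the proof is a Galois-descent argument on character values. Let $\chi$ be a character of $G$ of order $p^{e}$, faithful on $C_{p^e}$ and trivial on $H$; here the hypothesis that the Sylow $p$-subgroup of $H$ has exponent $<p^{e}$ guarantees that such a $\chi$ genuinely detects the $C_{p^e}$-factor, while $t\equiv 1\pmod{\exp(H)}$ guarantees $\sigma$ is trivial on $H$. Since $\gcd(t,p)=1$ (as $t\equiv 1\pmod p$), raising values to the $t$-th power is the Galois automorphism $\tau_t\colon\zeta_{p^e}\mapsto\zeta_{p^e}^{\,1+p^{e-1}}$, and the multiplier identity $\chi\circ\sigma(\underline D)=\chi(\underline D)$ becomes
\[
\tau_t\big(\chi(\underline D)\big)=\chi(\underline D).
\]
Because $\tau_t$ fixes $\zeta_{p^{e-1}}=\zeta_{p^e}^{\,p}$ and generates a group of order $p$ (order $2$ if $p=2$), its fixed field is exactly $\Q(\zeta_{p^{e-1}})$; hence $\chi(\underline D)\in\Z[\zeta_{p^{e-1}}]$. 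Concretely this forces the contributions of the cosets of $N$ lying outside $K$ to cancel, so that $\chi(\underline D)=\chi(\underline{D\cap K})$. Running this over all characters nontrivial on $N$ shows that $D\cap K$ carries all of the high-order spectrum: $\lvert\psi(\underline{D\cap K})\rvert^{2}=n$ for every character $\psi$ of $K$ nontrivial on $N$, which is precisely a (relative) difference-set condition inside $K$ with forbidden subgroup $N$ — the \emph{sub-difference set} appearing in the statement.

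It remains to show this configuration is impossible for odd $p$, and to isolate the surviving case when $p=2$; the dichotomy between $\sigma$ having odd order $p$ and being an involution is decisive. For $p=2$ the automorphism $\tau_t$ is an involution, and the obstruction degenerates to a rationality/perfect-square condition of the classical $-1$-multiplier type (most transparently when $e=2$, where $\tau_t$ is complex conjugation and $\chi(\underline D)^{2}=n$ forces $n$ to be a square); this is met exactly by Hadamard parameters, and the surviving configuration is a Hadamard difference set whose restriction to $K$ is a Hadamard sub-difference set of the same type, which in turn forces $G$ to be non-cyclic, matching the stated exception. For odd $p$ the conjugates $\zeta_p^{\,i}$ of $\sigma$ are non-real and pair off, and I would close the argument by combining $\lvert\chi(\underline D)\rvert^{2}=n$ with the integrality and nonnegativity of the slice counts $b_j=\lvert D\cap\pi^{-1}(j)\rvert$ (where $\pi\colon G\to C_{p^e}$) and a prime-ideal/self-conjugacy analysis of $\sqrt n$ in $\Z[\zeta_{p^{e-1}}]$ — equivalently an eigenvalue-multiplicity congruence for the order-$p$ automorphism $\sigma$ of the associated symmetric design — to contradict $n>1$ and $k<\tfrac v2$.

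I expect the main obstacle to be exactly this final step: converting the descent $\chi(\underline D)\in\Z[\zeta_{p^{e-1}}]$ into a numerical contradiction for odd $p$, while pinning down the unique Hadamard possibility for $p=2$. The counting identities alone turn out to be self-consistent for every $p$ (they merely re-encode the orbit structure), so the decisive input must be genuinely arithmetic — the behaviour of the prime $p$ and of $\sqrt n$ in the cyclotomic tower. Keeping the general factor $H$ inert (via $\exp(\mathrm{Syl}_p H)<p^{e}$ and $t\equiv1\pmod{\exp(H)}$) and justifying the fixed-translate reduction are the technical points I would treat most carefully.
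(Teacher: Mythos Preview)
Your approach is genuinely different from the paper's, and where the two diverge is instructive. The paper does not use Galois descent or cyclotomic arithmetic at all; after the orbit decomposition $D=A+PB$ (which you also obtain), it proceeds by pure counting. Comparing coefficients of non-identity elements of $P$ in $DD^{(-1)}=n+\lambda G$ gives $p\,|B|\le\lambda$, hence $|A|=k-p\,|B|\ge n$. On the other hand, $|A|$ is the identity coefficient of the projection $\rho(D)\in\Z[G/K]$, and the elementary intersection-number bound (Lemma~\ref{lem:projection_coefficient_bound}) gives $|A|\le\frac{k+(p-1)\sqrt n}{p}$. Combining yields $k-\sqrt n\le\lambda\,\frac{p}{p-1}\le 2\lambda$, and a short manipulation using $k(k-1)=\lambda(v-1)$ forces $v\le 4n$. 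Together with the universal bound $v\ge 4n-1$ this leaves only Paley--Hadamard and Hadamard parameters, and each is disposed of by feeding its explicit $(v,k,\lambda)$ back into the same inequality: the former forces $n=1$ when $p\ge 3$, the latter forces $p=2$.

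Your proposal, by contrast, does not close. You correctly derive $\chi(\underline D)=\chi(\underline{D\cap K})$ and the relative-difference-set condition $|\psi(\underline A)|^2=n$ for $\psi\in\widehat K$ nontrivial on $N$; but note that the Galois-descent step is doing no work beyond the orbit decomposition you already have (any character nontrivial on $P$ kills $PB$ directly, and $\chi(\underline A)$ lies in $\Z[\zeta_{p^{e-1}}]$ simply because $A\subset K$). The genuine gap is the final step for odd $p$: you invoke ``a prime-ideal/self-conjugacy analysis of $\sqrt n$ in $\Z[\zeta_{p^{e-1}}]$'' or an ``eigenvalue-multiplicity congruence'', but nothing of this kind is available from the hypotheses --- there is no assumed relation between the prime divisors of $n$ and $p$, so self-conjugacy need not hold, and the character equations you extract are not, on their own, obstructed for odd $p$. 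Likewise for $p=2$: the observation that $n$ must be a square (when $e=2$) is correct but does not single out Hadamard parameters, since many non-Hadamard families (e.g.\ projective planes of square order) also have square $n$. You are right that the decisive input must be something beyond the orbit bookkeeping; what you are missing is that this input can be the entirely elementary inequality $n\le|A|\le\frac{k+(p-1)\sqrt n}{p}$, which collapses the parameter space without any arithmetic in cyclotomic fields.
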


This theorem is an improvement on \cite[Theorem 2]{arasu2001nonexistence}, which under the same assumptions arrives at the conclusion that $t$ is not a multiplier if $4(p-1)n>v$. In view of (\ref{eq:basic}), their theorem does not cover the case $p=2$, which is covered by Theorem \ref{thm:forbidden}. In fact, when $p=2$, it is possible for $t$ to be a multiplier, but only in a Hadamard difference set that contains a sub-difference set of the same type in a certain subgroup (Theorem \ref{thm:interesting}).

\section{Mathematical preliminaries}

We will view a $(v,k,\lambda)$ difference set $D\subset G$ as the element $D=\sum_{g\in D} g$ in the group ring $\mathbb{Z}[G]$, so that the defining equation becomes
\begin{align}\label{eq:defining}
DD^{(-1)}=n+\lambda G.    
\end{align}
For a group ring element $F=\sum_{g\in G} f_g g\in\mathbb{Z}[G]$, we use the notation $F^{(t)}=\sum_{g\in G} f_g g^t\in\mathbb{Z}[G]$ for any integer $t$ relatively prime to $\text{exp}(G)$. We also let $|F|=\sum_{g\in G} f_g$. An integer $t$ is said to be a (numerical) multiplier of $D$ if $D^{(t)}=Dg$ for some $g\in G$; the group of multipliers is denoted by $M(D)$. We will use the fact that every difference set has a translate fixed by every multiplier \cite{mcfarland1978translates}.

If $H$ is a subgroup of $G$, then applying the canonical epimorhism $\rho:\Z[G]\to \Z[G/H]$ to equation (\ref{eq:defining}) shows that
\begin{align}\label{eq:}
    \rho(D)\rho(D)^{(-1)} = n+\lambda |H|(G/H).
\end{align} 

Let $\widehat{G}$ be the character group of $G$, that is, the group of homomorphisms $\chi:G\to\mathbb{C}^\times$, with product of characters defined by $(\chi_1\chi_2)(g)=\chi_1(g)\chi_2(g)$ for all $g\in G$. The characters extend to $\mathbb{Z}[G]$ in the natural way. For any subgroup $H$ of $G$, we let $H^\perp$ be the subgroup of characters that act trivially on $H$. There is the following character theoretic description of a difference set, which follows by applying characters to (\ref{eq:}) and recalling orthogonality relations.
\begin{align}\label{eq:characters}
\chi(\rho(D))\overline{\chi(\rho(D))} &= \begin{cases}
			n,& \text{if $\chi\in \widehat{G/H}$ is non-principal}\\
			k^2,& \text{if $\chi\in \widehat{G/H}$ is principal}.
		\end{cases}
\end{align} 
A group ring element $A\in\mathbb{Z}[G]$ is completely determined by its character values as follows.
\begin{lem}[Fourier inversion formula]\label{lem:Fourier_inversion}
    Let $A=\sum_{g\in G} a_g g \in \Z[G]$. Then $$a_g=\frac{1}{|G|}\sum_{\chi \in \widehat{G}}\chi(Ag^{-1}) \ \ \text{for all $g\in G$}. $$
\end{lem}
One ingredient needed to obtain Theorem \ref{thm:forbidden} is a bound on the coefficients of $\rho(D)$, which are sometimes called ``intersection numbers''. This will follow from (\ref{eq:characters}) and Lemma \ref{lem:Fourier_inversion}.

\begin{lem}
\label{lem:projection_coefficient_bound} Let $G$ be a finite abelian group, $H$ a subgroup of $G$ and $M=G/H$. Let $\rho:G\to M$ be the canonical epimorphism. If $D$ is a $(v,k,\lambda)$ difference set of order $n$ in $G$ and $\rho(D)=\sum_{m\in M} a_m   m\in\mathbb{Z}[M]$, then
    $$a_m\leq \frac{k+(|M|-1)\sqrt{n}}{|M|}$$
    for all $m\in M$.
\end{lem}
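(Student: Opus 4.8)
The plan is to apply the Fourier inversion formula (Lemma~\ref{lem:Fourier_inversion}) to the group ring element $\rho(D)\in\Z[M]$ and then estimate the resulting character sum using the character identity~(\ref{eq:characters}). Fixing $m\in M$ and applying the formula over the group $M$ with $A=\rho(D)$, I would write
\begin{align*}
a_m=\frac{1}{|M|}\sum_{\chi\in\widehat{M}}\chi(\rho(D)m^{-1})=\frac{1}{|M|}\sum_{\chi\in\widehat{M}}\chi(\rho(D))\,\overline{\chi(m)},
\end{align*}
where the second equality uses that $\chi$ is an algebra homomorphism and that $\chi(m^{-1})=\overline{\chi(m)}$, since character values lie on the unit circle.

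Next I would isolate the contribution of the principal character $\chi_0$. Because $\rho$ is a ring homomorphism preserving the coefficient sum, $\chi_0(\rho(D))=|\rho(D)|=|D|=k$ and $\chi_0(m)=1$, so the principal term contributes exactly $k$ to the sum. For each of the remaining $|M|-1$ non-principal characters, identity~(\ref{eq:characters}) gives $|\chi(\rho(D))|=\sqrt{n}$, while $|\overline{\chi(m)}|=1$. The triangle inequality then yields
\begin{align*}
\Bigl|\sum_{\chi\neq\chi_0}\chi(\rho(D))\,\overline{\chi(m)}\Bigr|\leq(|M|-1)\sqrt{n}.
\end{align*}

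Finally, since $a_m$ is a real integer and $k$ is real, the off-principal character sum $S=\sum_{\chi\neq\chi_0}\chi(\rho(D))\overline{\chi(m)}=|M|\,a_m-k$ is itself real, so $S\leq|S|\leq(|M|-1)\sqrt{n}$. Combining this with the principal term gives
\begin{align*}
a_m=\frac{k+S}{|M|}\leq\frac{k+(|M|-1)\sqrt{n}}{|M|},
\end{align*}
as claimed. I do not anticipate a genuine obstacle here, as the argument is a direct application of the two tools just established; the only point requiring a little care is the observation that $S$ is real, which is what lets the modulus bound become an honest upper bound on $a_m$ (rather than merely a bound on $\lvert a_m-k/|M|\rvert$), and this is automatic from $a_m,k\in\Z$.
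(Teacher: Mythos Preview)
Your proof is correct and follows essentially the same route as the paper: Fourier inversion on $M$ followed by the triangle inequality together with the character values from~(\ref{eq:characters}). The only cosmetic difference is that the paper uses the non-negativity of $a_m$ to write $|M|\,a_m=\bigl|\sum_{\chi}\chi(\rho(D)m^{-1})\bigr|$ directly, whereas you isolate the off-principal sum $S$ and observe it is real; both devices serve the same purpose of turning the modulus bound into an upper bound on $a_m$.
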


\begin{proof}
  Let $A=\rho(D)$. By (\ref{eq:characters}) and Lemma \ref{lem:Fourier_inversion},
  \begin{align*}
      |M|a_m=\left|\sum_{\chi\in\widehat{M}}\chi(Am^{-1})\right|\leq \sum_{\chi\in\widehat{M}}|\chi(A m^{-1})|=\sum_{\chi\in\widehat{M}}|\chi(A)|=k+(|M|-1)\sqrt{n}.
  \end{align*}
\end{proof}

We will prove a multiplier theorem in Section \ref{sec:mults}, which together with Theorem \ref{thm:forbidden} gives a non-existence criterion that can be used to rule out many open cases of abelian difference sets. For this purpose, we recall some terminology in algebraic number theory. A positive integer $m$ is called self-conjugate modulo $n$, if for every prime divisor $p\mid m$ there exists an integer $j_p$ such that $p^{j_p}\equiv -1\pmod{n'}$, where $n'$ is the $p$-free part of $n$. If $K/\mathbb{Q}$ is a Galois extension of number fields, then the Galois group $\text{Gal}(K/\mathbb{Q})$ acts transitively on the prime ideals $\mathfrak{p}_1,\dots,\mathfrak{p}_g$ lying above a prime $p$ in the ring of integers $\mathcal{O}_K$ in $K$. The decomposition group $D(\mathfrak{p}_i| p)$ is the stabilizer of $\mathfrak{p}_i$ under this action, which for abelian extensions depends only on $p$ and is denoted by $D(p)$. Here we are interested in decomposition groups in cyclotomic fields, since character values of difference sets are cyclotomic integers. We define $\zeta_n=\exp\left(\frac{2\pi i}{n}\right)$.

\begin{lem}[\cite{marcus1977number}]
    \label{lem:decomp}
    Let $n$ be a positive integer, let $K=\Q(\zeta_n)$ and let $p$ be a prime. Write $n=p^{a} n'$ where $\gcd(p,n')=1$. The Galois group of $K/\mathbb{Q}$ is $$\text{Gal}(K/\mathbb{Q})=\{\sigma_a:\zeta_n\mapsto\zeta_n^a:\operatorname{gcd}(a,n)=1\}\cong(\mathbb{Z}/n\mathbb{Z})^\times$$
    and the prime ideal decomposition of $(p)$ in $\mathcal{O}_K=\Z[\zeta_n]$ is given by
    $$p\mathcal{O}_K=(\mathfrak{p}_1\cdots \mathfrak{p}_g)^e$$
    where $e=\varphi(p^a)$, and $g=\frac{\varphi(n')}{\text{ord}_{n'}(p)}$. The decomposition group of $p$ is given by
    $$D(p)=\{\sigma_a\in\text{Gal}(K/\mathbb{Q}): a\equiv p^j\pmod{n'}\text{ for some $j\in\mathbb{Z}$}\}.$$
\end{lem}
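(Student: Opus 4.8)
The plan is to reduce everything to the coprime tower $\Q\subset L\subset K$ with $L=\Q(\zeta_{n'})$ and $K=\Q(\zeta_n)=L(\zeta_{p^a})$, treating the unramified part $L/\Q$ and the totally ramified part $K/L$ separately and then combining them. The isomorphism $\operatorname{Gal}(K/\Q)\cong(\Z/n\Z)^\times$ via $a\mapsto\sigma_a$ I would take as standard, coming from the irreducibility of the cyclotomic polynomial $\Phi_n$ and the fact that $\mathcal{O}_K=\Z[\zeta_n]$.

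First I would analyse the splitting of $p$ in $L=\Q(\zeta_{n'})$. Since $\gcd(p,n')=1$, the prime $p$ is unramified in $L$, and its residue degree equals the multiplicative order $f=\operatorname{ord}_{n'}(p)$, because the Frobenius at $p$ corresponds to the class of $p$ in $\operatorname{Gal}(L/\Q)\cong(\Z/n'\Z)^\times$ and generates the cyclic decomposition group in $L$. Hence $(p)$ factors in $\mathcal{O}_L$ as a product of $g=\varphi(n')/f$ distinct primes $\mathfrak{q}_1,\dots,\mathfrak{q}_g$, each of residue degree $f$; this can be justified either through the reduction of $\Phi_{n'}$ modulo $p$ into $g$ distinct irreducible factors of degree $f$, or directly from the order of Frobenius. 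Next I would pass to the top of the tower: because $[K:L]=\varphi(p^a)$ and $p$ is totally ramified in $\Q(\zeta_{p^a})$ with ramification index $\varphi(p^a)$ (here I use $\Phi_{p^a}(X)\equiv(X-1)^{\varphi(p^a)}\pmod p$, so that $1-\zeta_{p^a}$ generates the unique prime over $p$), each $\mathfrak{q}_i$ is totally ramified in $K/L$, and coprimality guarantees no further splitting or residue-degree growth is introduced. By multiplicativity of $e,f,g$ in towers this yields $p\mathcal{O}_K=(\mathfrak{p}_1\cdots\mathfrak{p}_g)^e$ with $e=\varphi(p^a)$, $f=\operatorname{ord}_{n'}(p)$ and $g=\varphi(n')/\operatorname{ord}_{n'}(p)$, consistent with $efg=\varphi(n)$.

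Finally, for the decomposition group I would identify $L$ as the maximal subextension of $K/\Q$ that is unramified at $p$, so that the inertia group is $I(p)=\operatorname{Gal}(K/L)=\{\sigma_a:a\equiv 1\pmod{n'}\}$ of order $e=\varphi(p^a)$. Since $K/\Q$ is abelian, $D(p)$ is generated by $I(p)$ together with any lift of the Frobenius $\zeta_{n'}\mapsto\zeta_{n'}^p$, that is, any $\sigma_b$ with $b\equiv p\pmod{n'}$. Writing a general product $\sigma_a\sigma_b^{\,j}=\sigma_{ab^j}$ with $a\equiv 1\pmod{n'}$, and using the Chinese remainder theorem to see that the $p^a$-component is unconstrained while the $n'$-component runs through the powers of $p$, I would conclude $D(p)=\{\sigma_c:c\equiv p^{\,j}\pmod{n'}\text{ for some }j\}$, whose cardinality $\varphi(p^a)\cdot\operatorname{ord}_{n'}(p)=ef$ matches $|D(p)|$ as a final check.

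I expect the main obstacle to be the bookkeeping of this last step: correctly pinning down the maximal unramified subextension and the Frobenius lift, and then verifying via the Chinese remainder theorem that $\langle I(p),\sigma_b\rangle$ produces exactly the stated set rather than something larger or smaller. The ramification and splitting inputs for $\Q(\zeta_{p^a})$ and $\Q(\zeta_{n'})$ are standard and only need to be invoked.
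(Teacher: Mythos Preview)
The paper does not prove this lemma; it is stated with a citation to Marcus's \emph{Number Fields} and used as a black box. Your outline is the standard textbook argument (essentially the one in Marcus): split the tower $\Q\subset\Q(\zeta_{n'})\subset\Q(\zeta_n)$, handle the unramified and totally ramified pieces separately, and read off $D(p)$ from the inertia group together with a Frobenius lift via the Chinese remainder theorem. It is correct and there is nothing to compare against.
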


\section{Forbidden multipliers and sub-difference sets}

We now give the proof of Theorem \ref{thm:forbidden}, after which we investigate more closely the prime $p=2$.

\begin{restatable}{theorem}{}\label{thm:forbidden}
Let $G=C_{p^e}\times H$ where $e\geq 2$ and $H$ is an abelian group whose Sylow $p$-subgroup has exponent less than $p^e$. Suppose that $t$ is an integer such that $t\equiv 1+p^{e-1}\pmod{p^{e}}$ and $t\equiv 1\pmod{\text{exp}(H)}$. Then $t$ is not a multiplier in any non-trivial $(v,k,\lambda)$ difference set $D$ in $G$, with the possible exception $p=2$ and $D$ is a Hadamard difference set in a non-cyclic group.
\end{restatable}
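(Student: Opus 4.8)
The plan is to invoke McFarland's translate theorem \cite{mcfarland1978translates} to assume $D^{(t)}=D$, and then to read off the precise combinatorial effect of the automorphism $\sigma_t\colon g\mapsto g^t$. Writing $x$ for a generator of $C_{p^e}$ and $U=\langle x^{p^{e-1}}\rangle$ for its unique subgroup of order $p$, the two congruences on $t$ say exactly that $\sigma_t$ fixes the index-$p$ subgroup $K=\langle x^p\rangle\times H$ pointwise and translates each nontrivial coset $x^iK$ ($1\le i\le p-1$) by the generator $u_i=x^{ip^{e-1}}$ of $U$. First I would decompose $D=\sum_{i=0}^{p-1}x^iD_i$ with each $D_i\in\Z[K]$ a $0$--$1$ element and compare coefficients in $D^{(t)}=D$ coset by coset; since $\sigma_t$ fixes $K$ this yields $D_i=u_iD_i$ for every $i\ge 1$. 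Hence each such $D_i$ is $U$-invariant, so the part of $D$ lying outside $K$ is a union of full cosets of $U$, and all of the irregularity is concentrated in $D_0=D\cap K$.

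The second step extracts the structure of $D_0$. Applying a character $\chi$ that is nontrivial on $U$ to $D^{(t)}=D$ and using $\chi\circ\sigma_t=\chi\chi_0$, where $\chi_0$ is the order-$p$ character trivial on $K$, I would iterate to get $\chi(D)=(\chi\chi_0^{\,j})(D)$ for all $j$ and sum over $j=0,\dots,p-1$ to conclude $\chi(D)=\chi(D_0)$; by (\ref{eq:characters}) this common value has absolute value $\sqrt n$. Feeding these identities into (\ref{eq:defining}) for $D_0$ shows $D_0D_0^{(-1)}=n+U\!\cdot\!C$ for a nonnegative $U$-invariant $C\in\Z[K]$, so that $D_0$ is a divisible-difference-set-like configuration: the identity is represented $k_0:=|D_0|$ times, every nonidentity element of $U$ exactly $k_0-n$ times, and the representation count is constant on each $U$-coset. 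Equivalently, under the Galois interpretation of Lemma \ref{lem:decomp}, $\chi(D)=\chi(D_0)$ lies in the fixed field of $\sigma_t$, namely $\Q(\zeta_{p^{e-1}},\zeta_{\exp(H)})$, while still having complex absolute value $\sqrt n$.

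The final step combines the coefficient bound of Lemma \ref{lem:projection_coefficient_bound}, applied to $\rho(D)$ for $\rho\colon\Z[G]\to\Z[G/U]$, with the counting relations above. Because the $U$-cosets outside $K$ contribute coefficient exactly $p$ to $\rho(D)$, while those inside $K$ contribute numbers $b_{\bar g}\le p$, I would play the identity $\sum_{\bar g}b_{\bar g}^2=pk_0-(p-1)n$ against the bound $b_{\bar g}\le\bigl(k+(|G/U|-1)\sqrt n\bigr)/|G/U|$ and against the fact that a difference set cannot lie in a proper subgroup (so $D$ must meet the complement of $K$). The goal is to show these constraints are incompatible with $|\chi(D_0)|=\sqrt n$ and with the integrality $\chi(D_0)\in\Z[\zeta_{p^{e-1}},\zeta_{\exp(H)}]$ unless $p=2$: in that case $U$ has order $2$, the shift is an involution, the half-filled $U$-cosets of $D_0$ assemble into a genuine sub-difference set, and the parameters collapse exactly to the Hadamard family $(4u^2,2u^2-u,u^2-u)$, with the cyclic case excluded separately because the induced sub-difference set would be a (nonexistent) cyclic Hadamard difference set.

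I expect the genuine obstacle to be precisely this endgame: converting the soft structural facts (the $U$-invariance of $D\setminus K$, the divisibility relation for $D_0$, and the field-membership of the character values) into a clean numerical contradiction for all $p\ge 3$ and for $p=2$ outside the Hadamard parameters. The difficulty is compounded by the arbitrary factor $H$, which forces the character values into the larger field $\Q(\zeta_{p^{e-1}},\zeta_{\exp(H)})$ rather than $\Q(\zeta_{p^{e-1}})$; isolating the Hadamard exception, and separately ruling out cyclic $G$, is the step I would anticipate requires the most care.
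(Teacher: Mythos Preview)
Your setup matches the paper exactly: after replacing $D$ by a translate fixed by every multiplier, the orbit analysis of $\sigma_t$ gives $D=A+PB$ with $A=D_0\subset K$ and $B\subset G\setminus K$ (your $U$ is the paper's $P$). Where you diverge is the endgame, and the route you sketch is both harder and, as you yourself flag, incomplete.

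The paper does not project onto $G/U$; it projects onto $G/K$, which has order exactly $p$. The coefficient of the identity coset in that projection is simply $|A|$, so Lemma~\ref{lem:projection_coefficient_bound} gives directly
\[
|A|\le \frac{k+(p-1)\sqrt n}{p}.
\]
The matching lower bound $|A|\ge n$ comes not from characters but from reading off the coefficient of a nonidentity element of $P$ in the expansion
\[
DD^{(-1)}=AA^{(-1)}+PAB^{(-1)}+PA^{(-1)}B+pPBB^{(-1)}=n+\lambda G,
\]
which forces $p|B|\le\lambda$ and hence $|A|=k-p|B|\ge n$. Combining the two bounds gives $k-\sqrt n\le\frac{p}{p-1}\lambda\le 2\lambda$, hence $(n-\lambda)^2\le n$; the identity $(n-\lambda)^2=n+\lambda(v-4n)$ (a rewriting of $k(k-1)=\lambda(v-1)$) then yields $v\le 4n$, pinning $v$ to $4n-1$ or $4n$ by~\eqref{eq:basic}. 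Each case is disposed of by plugging its parameters back into $k-\sqrt n\le\frac{p}{p-1}\lambda$.

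So the character identity $\chi(D)=\chi(D_0)$, the divisible-difference-set structure of $D_0$, the quadratic relation $\sum b_{\bar g}^2=pk_0-(p-1)n$, and the field-of-definition considerations you outline are all bypassed. The decisive move you are missing is to apply the coefficient bound to the \emph{small} quotient $G/K\cong C_p$ rather than the large quotient $G/U$; once you do that, the argument is a few lines of elementary inequalities and no algebraic number theory enters.
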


\begin{proof}
    The first part of the proof is the same as the proof of \cite[Theorem 2]{arasu2001nonexistence}. Write $G=\langle g\rangle \times H$ and let $D$ be a difference set in $G$ fixed by $t$. Let $K=\langle g^p\rangle \times H$ and let $P=\langle g^{p^{e-1}}\rangle\times \{1\}$.
    Then by considering the orbits of the action of $t$ on $G$ it is easy to see that $D=A+PB$ for some $A\subset K$ and $B\subset G\setminus K$. Now note that
    \begin{align}\label{eq:difs}
        DD^{(-1)}=AA^{(-1)}+PAB^{(-1)}+PA^{(-1)}B+pPBB^{(-1)}=n+\lambda G.
    \end{align}
    Comparing coefficients of non-identity elements of $P$ we conclude that $p|B|\leq \lambda$. On the other hand, $|A|+p|B|=k$ so $|A|\geq k-\lambda=n$. Let $\rho:\mathbb{Z}[G]\to \mathbb{Z}[G/K]$ be the canonical epimorphism. Then the coefficient of identity in $\rho(D)$ is $|A|$, so by Lemma \ref{lem:projection_coefficient_bound},
    $$n\leq |A|\leq\frac{k+(p-1)\sqrt{n}}{p}.$$
    Rearranging, we obtain $k-\sqrt{n}\leq \lambda\frac{p}{p-1}\leq 2\lambda$ and thus $0\leq n-\lambda\leq \sqrt{n}$. But then $n+\lambda(v-4n)=(n-\lambda)^2\leq n$, so that $v\leq 4n$. Inequality (\ref{eq:basic}) implies that $v=4n-1$ or $4n$. In the former case, $D$ is a $(4n-1,2n-1,n-1)$ Paley--Hadamard difference set and since $2\nmid 4n-1$, $p\geq 3$. But then $k-\sqrt{n}\leq \frac{3}{2}\lambda$ implies $n=1$, so $D$ is trivial contrary to assumption. If $D$ is a Hadamard difference set, then the inequality $k-\sqrt{n}\leq \lambda \frac{p}{p-1}$ implies $2\leq \frac{p}{p-1}$ so $p=2$, and this case is handled in Theorem \ref{thm:interesting}.
\end{proof}

McFarland \cite{mcfarland1990sub} proved that if $H$ and $K$ are abelian groups of relatively prime orders such that $|H|$ is even and $|K|$ is self-conjugate modulo $\text{exp}(H)$, then the existence of a Hadamard difference set in $H\times K$ implies the existence of a Hadamard difference set in $H$. More generally, he notes that if $D$ is a difference set in a group $H\times K$ and if the image of $D$ under $\rho: \mathbb{Z}[H\times K]\to \mathbb{Z}[H]$ is of the form $\rho(D)=aE+b(H-E)$ for some subset $E\subset H$ and integers $a$, $b$, then $E$ is a difference set in $H$ \cite{mcfarland1990sub}.

The sub-difference sets that we consider next are not of the above type. Instead, suppose that $D$ is a difference set in a group $G$, and suppose that for some subgroup $H$ of $G$, the image of $D$ under $\rho:\mathbb{Z}[G]\to\mathbb{Z}[G/H]$ is of the form
$$\rho(D)=aM+bhA,$$
for some $a,b\in\mathbb{Z}$ and $h\in G/H$, where $M$ is a subgroup of $G/H$ and $A\subset M$. Then it is easy to verify that $A$ is a difference set in $M$ of order $n/b^2$.

\begin{thm}
	\label{thm:interesting}
	Let $D$ be a Hadamard difference set in the group $G=\langle g \rangle  \times H$, where the order of $g$ is $2^e \geq 4$, and $H$ is an abelian group whose Sylow $2$-subgroup has exponent less than $2^e$. Let $K=\langle g^{2}\rangle \times H$ and $P=\langle g^{2^{e-1}}\rangle\times\{1\}$.
    Suppose that $t$ is an integer such that $t\equiv 1+2^{e-1}\pmod{2^{e}}$ and $t\equiv 1\pmod{\text{exp}(H)}$. If $D$ has $t$ as a multiplier, then $D$ has a Hadamard sub-difference set in $K/P$.
\end{thm}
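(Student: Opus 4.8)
My plan is to run the opening of the proof of Theorem \ref{thm:forbidden} verbatim and then extract extra information from the fact that, for Hadamard parameters, the coefficient bound of Lemma \ref{lem:projection_coefficient_bound} is attained with equality. First I would note that since $t$ is a multiplier I may replace $D$ by a translate fixed by $t$, and then the orbit analysis of Theorem \ref{thm:forbidden} (with $p=2$) yields $D=A+PB$ with $A\subseteq K$ and $B\subseteq G\setminus K$ a transversal of the cosets of $P$ lying in $G\setminus K$. Substituting the Hadamard parameters $n=u^2$, $k=2u^2-u$, $\lambda=u^2-u$ (so $\sqrt n=u$) into the chain of inequalities from that proof forces $|A|=n$ and hence $2|B|=\lambda$; that is, the instance of Lemma \ref{lem:projection_coefficient_bound} for the quotient $G/K\cong C_2$ is sharp.

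The crucial step is then to revisit the coefficient of the non-identity element $g^{2^{e-1}}$ of $P$ in equation (\ref{eq:difs}). The mixed terms $PAB^{(-1)}+PA^{(-1)}B$ are supported on $G\setminus K$ and contribute nothing there, while $2PBB^{(-1)}$ contributes exactly $2|B|=\lambda$, because $B$ is a transversal of the $P$-cosets and so $BB^{(-1)}$ has no $g^{2^{e-1}}$-term. Matching with the right-hand side, whose coefficient is $\lambda$, forces the contribution of $AA^{(-1)}$ to vanish, meaning no two elements of $A$ differ by $g^{2^{e-1}}$. Since $|P|=2$, this says $A$ meets each coset of $P$ inside $K$ at most once; combined with $|A|=n=|K/P|$ it meets each coset exactly once. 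Writing $\rho:\Z[G]\to\Z[G/P]$ for the canonical epimorphism, this is precisely the statement $\rho(A)=K/P$, with every coefficient equal to $1$.

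To finish, I would observe that $\rho(P)=2$ and that $\rho(B)$ is a $0/1$-element supported on the single non-trivial coset of $K/P$ in $G/P$, so $\rho(B)=\rho(g)A'$ for some $A'\subseteq K/P$. Then $\rho(D)=\rho(A)+2\rho(B)=(K/P)+2\,\rho(g)A'$, which is exactly the shape $aM+bhA'$ (with $a=1$, $M=K/P$, $b=2$, $h=\rho(g)$) treated in the remark preceding the theorem; hence $A'$ is a difference set in $K/P$ of order $n/b^2=n/4$. Since $|K/P|=|G|/4=u^2$ is four times this order, $A'$ automatically has Hadamard parameters (with $w=u/2$), completing the proof. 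I expect the one genuinely delicate point to be the implication $|A|=n\Rightarrow\rho(A)=K/P$: the whole argument rests on reading the equality case of the intersection-number bound through the $g^{2^{e-1}}$-coefficient of $DD^{(-1)}$, which is what pins $A$ down to a transversal. Once that is established, the recognition of the sub-difference-set form and the verification of Hadamard parameters are routine.
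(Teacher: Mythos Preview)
Your proposal is correct and follows essentially the same route as the paper: derive $|A|=n$ from the equality case of the inequalities in Theorem~\ref{thm:forbidden}, use the coefficient of $g^{2^{e-1}}$ in (\ref{eq:difs}) (together with $2|B|=\lambda$ and the fact that distinct elements of $B$ lie in distinct $P$-cosets) to see that $AA^{(-1)}$ vanishes on $P\setminus\{1\}$, conclude that $\rho(A)=K/P$, and then recognise $\rho(D)=K/P+2hA'$ as the sub-difference-set form. The only imprecision is calling $B$ ``a transversal of the cosets of $P$ lying in $G\setminus K$'' rather than a \emph{partial} transversal, but you only use the weaker property, so the argument is unaffected.
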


\begin{proof}
    Suppose that $D$ is fixed by $t$ and write $D=A+PB$ as in Theorem \ref{thm:forbidden}. The equality $|A|=n=\frac{v}{4}$ is forced by the proof of that theorem. Moreover, (\ref{eq:difs}) shows that $AA^{(-1)}$ has coefficients disjoint from $P\setminus\{1\}$. These two facts imply that the elements of $A\subset K$ lie in distinct equivalence classes modulo $P$. Thus, the image of $D$ under the canonical map $\rho:\mathbb{Z}[G]\to \mathbb{Z}[G/P]$ is $\rho(D)=K/P+2hB_0$ for some $B_0\subset K/P$ and $h\in G/P$. By the paragraph preceding this theorem we see that $B_0$ is a difference set of order $\frac{n}{4}$ in a group of order $\frac{v}{4}$. This means that $B_0$ is a Hadamard difference set. 
\end{proof}

\begin{rem}
\label{rem:menon}
 There are examples where $t$ as in Theorem \ref{thm:interesting} is a multiplier. One class of such difference sets are reversible difference sets, \emph{i.e.} those with multiplier $-1$, by \cite[Theorem 4.28]{lander1983symmetric}. There are also examples of non-reversible difference sets; one example is shown in Figure \ref{fig:dif2}. The theorem shows that difference sets in $C_k\times H$ with $|H|$ odd and $k=2^e\leq 8$ cannot have the multiplier.
\end{rem}

\begin{figure}[H]
	\begin{center}
		\small{
			\begin{tabular}{|c|c|c|c|c|c|c|c|c|c|c|c|c|c|c|c|c|}
				\hline
				\ & 0 & 1 & 2 & 3 & 4 & 5 & 6 & 7 & 8 & 9 & 10 & 11 & 12 & 13 & 14 & 15 \\
				\hline
				0& $x$ &$x$&$x$&$x$&$x$&$o$&$x$&$x$&$o$&$x$&$o$&$x$&$o$&$o$&$o$&$x$\\
				1&$x$&$x$&$x$&$o$&$o$&$o$&$x$&$o$&$o$&$x$&$o$&$o$&$x$&$o$&$o$&$o$\\
				2&$x$&$o$&$o$&$o$&$x$&$x$&$o$&$o$&$o$&$o$&$x$&$o$&$o$&$x$&$x$&$o$\\
				3&$x$&$x$&$o$&$o$&$o$&$o$&$o$&$o$&$o$&$x$&$x$&$o$&$x$&$o$&$x$&$o$\\
				\hline
			\end{tabular}
		}
	\end{center}
	\caption{A $(64,28,12)$ Hadamard difference set in $C_{16}\times C_4$ with $t=9$ as a multiplier. Note that column $j+8 \pmod{16}$ is equal to column $j \pmod{16}$ for all odd $j$, and the complement of column $j\pmod{16}$ for all even $j$.}
	\label{fig:dif2}
\end{figure}

\section{Bound on the size of the multiplier group}
\label{sec:M_bound}

Theorem \ref{thm:forbidden}, when relevant, gives a good bound for the size of the numerical multiplier group, since any integer whose power is a forbidden multiplier, is itself a forbidden multiplier.

\begin{thm}
    \label{thm:multiplier_group}
    Let $G=C_m\times H$ be an abelian group where $m=\text{exp}(G)$, and for every prime divisor $p$ of $m$ with $p^e \parallel m$ and $e\geq 2$, the Sylow $p$-subgroup of $H$ has exponent less than $p^e$. Let $D$ be a non-trivial difference set in $G$. Define $c_{m}=1$ if $8\nmid m$ and $c_{m}=2$ otherwise. Then at least one of the following is true.

    \begin{enumerate}
       \item $M(D)$ is isomorphic to a subgroup of $(\mathbb{Z}/c_m\text{rad}(m)\mathbb{Z})^\times$, and in particular
       \begin{align*}
           |M(D)|&\leq c_m \varphi(\text{rad}(m)),
       \end{align*}
          where $\varphi$ is Euler's totient function and $\text{rad}(m)$ is the product of the prime divisors of $m$;
       \item $D$ is a Hadamard difference set containing a multiplier as in Theorem \ref{thm:interesting}.
    \end{enumerate}
\end{thm}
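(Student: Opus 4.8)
The plan is to realise $M(D)$ as a subgroup of $(\mathbb{Z}/m\mathbb{Z})^\times$ — multipliers being well defined modulo $\exp(G)=m$ — and to show that, unless the second alternative holds, the reduction homomorphism $\pi\colon (\mathbb{Z}/m\mathbb{Z})^\times \to (\mathbb{Z}/c_m\,\mathrm{rad}(m)\mathbb{Z})^\times$ (defined because $c_m\,\mathrm{rad}(m)\mid m$) restricts to an injection on $M(D)$. Granting this, $M(D)\cong\pi(M(D))$ is isomorphic to a subgroup of $(\mathbb{Z}/c_m\,\mathrm{rad}(m)\mathbb{Z})^\times$, and a direct computation of $\varphi(c_m\,\mathrm{rad}(m))$ in the two cases $c_m=1$ and $c_m=2$ gives the stated bound $c_m\varphi(\mathrm{rad}(m))$. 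Thus the whole theorem reduces to analysing $M(D)\cap\ker\pi$.

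First I would describe $\ker\pi$ explicitly through the decomposition $(\mathbb{Z}/m\mathbb{Z})^\times\cong\prod_{p\mid m}(\mathbb{Z}/p^{e_p}\mathbb{Z})^\times$, where $p^{e_p}\parallel m$. For odd $p$ the $p$-component of $\ker\pi$ is $\{x\equiv 1\pmod p\}$, the Sylow $p$-subgroup $C_{p^{e_p-1}}$ of the cyclic group $(\mathbb{Z}/p^{e_p}\mathbb{Z})^\times$; for $p=2$ it is the group of units congruent to $1$ modulo the $2$-part of $c_m\,\mathrm{rad}(m)$, which is cyclic in every case — equal to $(\mathbb{Z}/4\mathbb{Z})^\times$ when $4\parallel m$ and to $\langle 5\rangle\cong C_{2^{e_2-2}}$ when $8\mid m$ — and whose unique element of order $2$ is $1+2^{e_2-1}$. (The role of the factor $c_m$ is precisely to keep this $2$-part cyclic, discarding the non-forbidden involution $-1$.) The key structural fact I would record is that each nontrivial $p$-component of $\ker\pi$ is cyclic, its unique order-$p$ subgroup is generated by $1+p^{e_p-1}$, and $e_p\ge 2$ whenever that component is nontrivial.

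Next, take any $1\ne t\in M(D)\cap\ker\pi$ and isolate a single prime at which it is nontrivial. If $t_p\ne 1$ for some odd $p$, then $\langle t_p\rangle$ contains the order-$p$ subgroup, so a suitable power of $t_p$ equals $1+p^{e_p-1}$ exactly; since $\mathrm{ord}(t_p)$ is a power of $p$ while the orders of the components $t_q$ ($q\ne p$) are coprime to $p$, I can choose one exponent $K$ that realises this power in the $p$-slot while killing every other slot, obtaining $t^K\in M(D)$ with $t^K\equiv 1+p^{e_p-1}\pmod{p^{e_p}}$ and $t^K\equiv 1\pmod{m/p^{e_p}}$. Writing $G=C_{p^{e_p}}\times H'$ with $H'=C_{m/p^{e_p}}\times H$ (whose Sylow $p$-subgroup is that of $H$, hence of exponent $<p^{e_p}$), this says exactly that $t^K$ has the forbidden form of Theorem \ref{thm:forbidden}; as $p$ is odd, that theorem yields a contradiction. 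Hence all odd components of $t$ are trivial and $t$ is supported on the prime $2$ with $e_2\ge 2$. The same construction, now raising $t_2$ to the power $2^{s-1}$ to reach the unique involution $1+2^{e_2-1}$, produces $t^K\in M(D)$ of the forbidden $p=2$ form. By Theorem \ref{thm:forbidden} this is impossible unless $D$ is a Hadamard difference set, in which case Theorem \ref{thm:interesting} applies to $t^K$ and furnishes the Hadamard sub-difference set of the second alternative.

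Assembling these steps: if the second alternative fails, then no nontrivial element can lie in $M(D)\cap\ker\pi$, so $\pi$ is injective on $M(D)$ and the first alternative holds. The main obstacle I anticipate is the bookkeeping in the reduction to Theorem \ref{thm:forbidden}: one must verify that the single-prime congruences produced by the power construction correctly encode both $t^K\equiv 1+p^{e_p-1}\pmod{p^{e_p}}$ and $t^K\equiv 1\pmod{\exp(H')}$ after the change of decomposition $C_m=C_{p^{e_p}}\times C_{m/p^{e_p}}$, and — in the $p=2$ case — the identification of the unique involution of the cyclic $2$-part with $1+2^{e_2-1}$, which is exactly what makes the exceptional Hadamard case of Theorem \ref{thm:interesting} the only escape.
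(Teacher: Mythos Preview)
Your proposal is correct and follows essentially the same approach as the paper: identify $M(D)$ inside $(\mathbb{Z}/m\mathbb{Z})^\times$, show that its intersection with the kernel $K=\ker\pi$ of the reduction map to $(\mathbb{Z}/c_m\,\mathrm{rad}(m)\mathbb{Z})^\times$ is trivial (unless the Hadamard exception occurs) by producing a power of any nontrivial element of $M(D)\cap K$ that has the forbidden shape of Theorem~\ref{thm:forbidden}, and conclude that $\pi$ embeds $M(D)$. The only presentational difference is that the paper observes directly that $K$ is cyclic of order $m/(c_m\,\mathrm{rad}(m))$ and hence any nontrivial element has a prime-order power, whereas you reach the same endpoint via a CRT decomposition and a separate treatment of odd primes versus $p=2$; the content is the same.
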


\begin{proof}
    Let $D$ be a difference set not containing a multiplier as in Theorem \ref{thm:interesting}. Identify $M(D)$ with a subgroup of $(\mathbb{Z}/m\mathbb{Z})^\times$ and define $$K=\{t\in(\mathbb{Z}/m\mathbb{Z})^\times:t\equiv 1\pmod{c_m \text{rad}(m)}\}.$$
    Recalling that $(\mathbb{Z}/2^\ell\mathbb{Z})^\times =\langle -1\rangle\langle 5\rangle \cong C_2 \times C_{2^{\ell-2}}$ for $\ell\geq 2$ and $(\mathbb{Z}/p^\ell \mathbb{Z})^\times\cong C_{(p-1)p^{\ell-1}}$ for an odd prime $p$ and all $\ell\geq 1$, we observe that $K$ is cyclic of order $\frac{m}{c_m\text{rad}(m)}$. Let $t\in M(D)\cap K$. If $t$ is non-trivial, then there exists a positive integer $k$ and a prime $p$ with $p^e\parallel m$, $e\geq 2$, such that $t^k\equiv 1+p^{e-1}\pmod{p^e}$ and $t^k\equiv 1\pmod{\text{exp}(C_{m/p^e}\times H)}$. This contradicts Theorem \ref{thm:forbidden}. Thus, $M(D)\cap K=\{1\}$ and $M(D)$ is isomorphic to a subgroup of $(\mathbb{Z}/m\mathbb{Z})^\times /K\cong (\mathbb{Z}/c_m\text{rad}(m)\mathbb{Z})^\times$. The bound on $|M(D)|$ follows.
\end{proof}

From Theorem \ref{thm:multiplier_group} we deduce the next corollary. We will need the fact that $-1$ is never a multiplier in a non-trivial \emph{cyclic} difference set \cite{brualdi1965note} and that no non-trivial cyclic Hadamard difference set, if it exists, has the multiplier of Theorem \ref{thm:interesting}, since $n$ must be odd \cite{turyn1965character}.

\begin{cor}
    \label{cor:multiplier_group_cyclic}
    Let $D$ be a non-trivial cyclic difference set in a group of order $v$ and let $c_v$ be as in Theorem \ref{thm:multiplier_group}. Then $M(D)$ is isomorphic to a subgroup of
	\begin{align*}
		\begin{cases}
			(\mathbb{Z}/\text{rad}(v)\mathbb{Z})^\times /\langle -1\rangle,&\text{ if $v\not \equiv 0\pmod{4}$},\\
            (\mathbb{Z}/c_v\text{rad}(v)\mathbb{Z})^\times,&\text{ if $v\equiv 0\pmod{4}$}.
		\end{cases}
	\end{align*}
In particular, 
    \begin{align*}
    	|M(D)| &\leq \begin{cases}
        \frac{\varphi(\text{rad}(v))}{2},&\text{ if $v\not \equiv 0\pmod{4}$},\\
    		c_v\varphi(\text{rad}(v)),&\text{ if $v\equiv 0\pmod{4}$}.
    	\end{cases}
    \end{align*}
\end{cor}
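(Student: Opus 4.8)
The plan is to apply Theorem~\ref{thm:multiplier_group} to the cyclic group $G=C_v$, for which $m=\text{exp}(G)=v$ and the complementary factor $H$ is trivial, so the Sylow hypotheses hold vacuously and $c_m=c_v$. The theorem offers two alternatives, and the first job is to discard the second, namely that $D$ is a Hadamard difference set carrying a multiplier as in Theorem~\ref{thm:interesting}. Here I would use the cited facts: a Hadamard difference set has parameters $(4u^2,2u^2-u,u^2-u)$, hence $v=4u^2$ and order $n=u^2$, while Turyn's theorem forces $n$ to be odd for any non-trivial cyclic Hadamard difference set. Since the multiplier of Theorem~\ref{thm:interesting} yields a sub-difference set of order $n/4$, which requires $4\mid n$, it cannot occur when $n$ is odd. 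Thus the second alternative is impossible in the cyclic case, and alternative~1 holds: $M(D)$ embeds in $(\mathbb{Z}/c_v\text{rad}(v)\mathbb{Z})^\times$ with $|M(D)|\leq c_v\varphi(\text{rad}(v))$.

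This immediately settles the case $v\equiv 0\pmod 4$, which is verbatim the second branch of the corollary. It remains to sharpen the bound by a factor of two when $v\not\equiv 0\pmod 4$; here $4\nmid v$ gives $8\nmid v$, so $c_v=1$ and alternative~1 becomes an embedding $M(D)\hookrightarrow(\mathbb{Z}/\text{rad}(v)\mathbb{Z})^\times$ through reduction modulo $\text{rad}(v)$. The target is to promote this to an embedding into the quotient $(\mathbb{Z}/\text{rad}(v)\mathbb{Z})^\times/\langle -1\rangle$, which is exactly where Brualdi's result $-1\notin M(D)$ should be used.

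The crux is to show that no element of $M(D)$ reduces to $-1$ modulo $\text{rad}(v)$. Granting this, the composite $M(D)\to(\mathbb{Z}/\text{rad}(v)\mathbb{Z})^\times\to(\mathbb{Z}/\text{rad}(v)\mathbb{Z})^\times/\langle -1\rangle$ has trivial kernel: a kernel element reduces to $1$ or to $-1$, the former being trivial because the reduction already embeds $M(D)$, and the latter being excluded, which gives the embedding and the bound $\varphi(\text{rad}(v))/2$. To prove the exclusion I would recall from the proof of Theorem~\ref{thm:multiplier_group} that $M(D)\cap K=\{1\}$ with $K=\{t:t\equiv 1\pmod{\text{rad}(v)}\}$. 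If some $t\in M(D)$ had $t\equiv -1\pmod{\text{rad}(v)}$, then $t^2\equiv 1\pmod{\text{rad}(v)}$ would place $t^2\in M(D)\cap K=\{1\}$, so $t^2=1$ in $(\mathbb{Z}/v\mathbb{Z})^\times$. Factoring $v$ into prime powers and using $4\nmid v$ (so every odd prime-power factor has cyclic unit group and the $2$-part is at most $2$), the conditions $t^2=1$ and $t\equiv -1\pmod{\text{rad}(v)}$ force $t\equiv -1$ modulo each prime-power divisor, whence $t=-1$ in $(\mathbb{Z}/v\mathbb{Z})^\times$ by the Chinese Remainder Theorem, contradicting $-1\notin M(D)$.

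I expect the genuine obstacle to be this final number-theoretic step and its reliance on $4\nmid v$. Once $4\mid v$ (even with $8\nmid v$), the unit group $(\mathbb{Z}/v\mathbb{Z})^\times$ acquires square roots of unity other than $\pm 1$, the Chinese Remainder pin-down breaks, and a multiplier reducing to $-1$ can no longer be excluded; this is precisely why the factor-of-two improvement is claimed only for $v\not\equiv 0\pmod 4$, the remaining case resting on the bare bound from alternative~1.
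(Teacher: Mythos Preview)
Your proposal is correct and follows essentially the same route as the paper: apply Theorem~\ref{thm:multiplier_group} with $H$ trivial, discard alternative~2 via Turyn's odd-$n$ constraint on cyclic Hadamard difference sets, and for $v\not\equiv 0\pmod 4$ upgrade the embedding by showing no multiplier lands in the coset $-K$. The only cosmetic difference is in that last step: you deduce $t^2=1$ and then use the CRT together with $4\nmid v$ to pin down $t=-1$, whereas the paper observes directly that $K$ has odd order when $v\not\equiv 0\pmod 4$, so $-1$ is the unique involution in $\tilde K=\langle -1\rangle K$, and any nontrivial element of $M(D)\cap\tilde K$ would have a power equal to $-1\in M(D)$.
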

\begin{proof}
    If $v\equiv 0\pmod{4}$ then the claim follows from Theorem \ref{thm:multiplier_group} and the paragraph preceding this corollary. If $v\not\equiv 0\pmod{4}$, then consider $\Tilde{K}=\langle -1\rangle K$ where $K$ is as in the proof of Theorem \ref{thm:multiplier_group}. Note that $M(D)\cap\Tilde{K}=\{1\}$, as $M(D)\cap K=\{1\}$, $K$ has odd order and $-1\not\in M(D)$. Therefore, $M(D)$ is isomorphic to a subgroup of $(\mathbb{Z}/v\mathbb{Z})^\times/\Tilde{K}\cong (\mathbb{Z}/\text{rad}(v)\mathbb{Z})^\times /\langle -1\rangle$.
\end{proof}

In the cyclic case, Xiang and Chen \cite{xiang1995size} proved the following theorem.

\begin{thm}[\cite{xiang1995size}]
\label{thm:XiangChen}
    Let $D$ be a non-trivial cyclic $(v,k,\lambda)$ difference set. Then the multiplier group $M(D)$ satisfies $|M(D)|\leq k$, except when $D=\{3,6,7,12,14\}$ is the cyclic $(21,5,1)$ difference set associated with a projective plane. 
\end{thm}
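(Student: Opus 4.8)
The plan is to use the cited fact that $D$ has a translate $D'$ fixed setwise by the whole multiplier group $M=M(D)$, and to study the action of $M$ on $\mathbb{Z}/v\mathbb{Z}$ by multiplication. Identifying $M$ with a subgroup of $(\mathbb{Z}/v\mathbb{Z})^\times$, the set $D'$ is a union of $M$-orbits, and since $M$ is abelian the stabilizer of $x\in D'$ is $M_x=\{t\in M:t\equiv 1\pmod{d_x}\}$, where $d_x=v/\gcd(x,v)$ is the additive order of $x$. The clean engine of the whole argument is this: if some $x\in D'$ is a generator of $\mathbb{Z}/v\mathbb{Z}$, i.e.\ $\gcd(x,v)=1$, then $d_x=v$ forces $M_x=\{1\}$, so the orbit $Mx\subseteq D'$ has size exactly $|M|$ and hence $|M|\le |D'|=k$. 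First I would record this, reducing to the case where every element of $D'$ is a non-generator; equivalently, whenever $|M|>k$ every element of $D'$ must be a non-unit.

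This remaining case is exactly where the exception lives. For the $(21,5,1)$ set $D=\{3,6,7,12,14\}$ every element shares a factor with $21$, the unique fixed translate contains no generator, $M=\langle 2\rangle$ has order $6$, and $M$ acts with orbits $\{3,6,12\}$ and $\{7,14\}$ of sizes $3$ and $2$, so $|M|=6>5=k$. In general, when no element of $D'$ is a generator, $D'$ lies in the set of non-units, whence $k\le v-\varphi(v)$, and every $M$-orbit on $D'$ has size a proper divisor of $|M|$. Writing $\pi_d$ for reduction $(\mathbb{Z}/v\mathbb{Z})^\times\to(\mathbb{Z}/d\mathbb{Z})^\times$, an orbit of an element of order $d$ has size $|\pi_d(M)|$, so $k=\sum_{O}|O|$ with each orbit $O$ satisfying $|O|=|\pi_{d}(M)|$; moreover $M\hookrightarrow\prod_{p\mid v}\pi_p(M)$ gives $|M|\le\prod_{p\mid v}|\pi_p(M)|$. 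The phenomenon $|M|>k$ thus occurs precisely when these orbit sizes multiply up to far more than they sum to.

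To make progress I would combine Corollary \ref{cor:multiplier_group_cyclic} with the relations (\ref{eq:fundamental}) and (\ref{eq:basic}). The corollary bounds $|M|$ by $\tfrac12\varphi(\text{rad}(v))$ (or by $c_v\varphi(\text{rad}(v))$ when $4\mid v$), a quantity depending only on $\text{rad}(v)$, whereas $k>\sqrt{v-1}$ from (\ref{eq:fundamental}). Hence for each fixed radical only finitely many $v$ fail to satisfy $\tfrac12\varphi(\text{rad}(v))\le\sqrt{v-1}<k$, and the prime-power cases are immediate: for odd $v=p^{a}$ with $a\ge 2$ one has $\tfrac12(p-1)<\sqrt{p^{a}-1}<k$, and for $4\mid v=2^{a}$ the corollary bound is at most $c_v\le 2\le k$. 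This disposes of every $v$ that is large relative to its radical.

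The genuine obstacle is the squarefree regime with small $\lambda$, the projective-plane range to which $(21,5,1)$ belongs: here $\varphi(\text{rad}(v))/2$ is comparable to $v/2$ while $k$ is only of order $\sqrt{v}$, so the corollary bound alone is far too weak, and moreover the constraint $k\le v-\varphi(v)$ does not bound $v$. In this regime I would abandon the crude bound and exploit the orbit decomposition directly, pitting $|M|\le\prod_{p\mid v}|\pi_p(M)|$ against $k=\sum_O|\pi_{d}(M)|\le v-\varphi(v)$ together with the divisibility forced on $k$ by (\ref{eq:fundamental}), to cut the surviving squarefree parameter sets down to a short explicit list and verify that $(21,5,1)$ is the only one admitting $|M|>k$. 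Carrying out this last reduction so that no other squarefree parameter set survives is the technical heart of the proof, and is where the finer number-theoretic input (self-conjugacy constraints on the character values of $D'$, and the precise interaction of the projections $\pi_p(M)$ with the prime divisors of $v$) must be brought to bear.
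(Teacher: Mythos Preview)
This theorem is not proved in the paper: it is quoted from Xiang and Chen \cite{xiang1995size} as an external result, included only so that the paper can compare their bound with its own new Corollary~\ref{cor:multiplier_group_cyclic}. There is therefore no proof in the paper against which to compare your proposal.

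As for the proposal on its own merits, your opening reduction is sound: passing to a translate $D'$ fixed by all of $M(D)$ and observing that any unit $x\in D'$ has trivial stabilizer, whence $|M|\le k$, is correct and is the natural first step. Your treatment of the residual case, however, is not a proof but a plan. You correctly isolate the squarefree, small-$\lambda$ regime as the genuine obstruction, and then explicitly defer the work with the sentence ``Carrying out this last reduction \dots\ is the technical heart of the proof.'' Invoking Corollary~\ref{cor:multiplier_group_cyclic} here is also logically backwards---that corollary is a new result of the present paper, so using it to establish an older cited theorem inverts the intended direction of comparison---and, as you yourself concede, it is far too weak precisely where it matters: in the squarefree range $\tfrac12\varphi(\mathrm{rad}(v))$ is of order $v$ while $k$ is of order $\sqrt{v}$. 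The proposal therefore has a real gap: the case where $D'$ consists entirely of non-units is left unresolved, and the vague appeal to ``self-conjugacy constraints'' and ``the precise interaction of the projections $\pi_p(M)$'' is not a substitute for the actual argument.
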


We now list a couple of examples where Corollary \ref{cor:multiplier_group_cyclic} gives a smaller bound than Theorem \ref{thm:XiangChen}. Cyclic difference sets arising from the point-hyperplane construction of the projective geometry $\text{PG}(m,q)$ have parameters $$(v,k,\lambda)=\left(\frac{q^{m+1}-1}{q-1},\frac{q^m-1}{q-1},\frac{q^{m-1}-1}{q-1}\right).$$ When $(m,q)=(5,2)$, one obtains a $(63,31,15)$ difference set with multiplier group $$M(D)=\{1,2,4,8,16,32\}.$$ Corollary \ref{cor:multiplier_group_cyclic} gives the upper bound $|M(D)|\leq\frac{\varphi(\text{rad}(v))}{2}= 6$, which is tight, whereas Theorem \ref{thm:XiangChen} gives $|M(D)|\leq k=31$. For another example, when $(m,q)=(4,3)$, one obtains a $(121,40,13)$ cyclic difference set with multiplier group $\{1,3,9,27,81\}$. Corollary \ref{cor:multiplier_group_cyclic} shows that $|M(D)|\leq \frac{\varphi(\text{rad}(v))}{2}=5$, which is tight, whereas Theorem \ref{thm:XiangChen} gives the bound $|M(D)|\leq k=40$.

\section{A multiplier theorem and non-existence of difference sets}\label{sec:mults}

As it turns out, Theorem \ref{thm:forbidden} together with a strong enough multiplier theorem can be used to rule out difference sets. Finding conditions that ensure the existence of multipliers is a problem in its own right. One of the earliest multiplier theorems is presented below. From now on, we use the notation $v^*=\text{exp}(G)$ for the exponent of a group $G$, and $\mathbb{N}[G]$ for the semiring of group ring elements with non-negative coefficients.

\begin{thm}[Menon]\label{thm:menon}
	Let $D$ be a $(v,k,\lambda)$ difference set and let $t$ be an integer relatively prime to $v$. Let $n_1$ be a divisor of $n$ relatively prime to $v$, and suppose that for each prime divisor $p\mid n_1$, there exists an integer $j_p$ such that $t\equiv p^{j_p}\pmod{v^{*}}$. If $n_1>\lambda$, then $t$ is a multiplier of $D$.
\end{thm}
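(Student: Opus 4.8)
The plan is to show directly that the group ring element $E=D^{(t)}D^{(-1)}$ has the special shape $E=\lambda G+n g_0$ for some $g_0\in G$; once this is in hand, multiplying by $D$ and using the defining equation $DD^{(-1)}=n+\lambda G$ gives $ED=\lambda kG+ng_0D$ on one side and $nD^{(t)}+\lambda kG$ on the other, which collapses to $D^{(t)}=g_0D$, exactly the assertion that $t$ is a multiplier. So the entire argument reduces to pinning down $E$, and I would work throughout with $F=E-\lambda G$, aiming to prove $F=ng_0$.

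First I would record the character values of $F$. For the principal character $\chi_0$ one has $\chi_0(E)=k^2$ and $\chi_0(\lambda G)=\lambda v$, so $\chi_0(F)=k^2-\lambda v=n$. For a non-principal $\chi$ one has $\chi(G)=0$, and since $\chi(D^{(t)}D^{(-t)})=\chi((DD^{(-1)})^{(t)})=n$ and likewise $|\chi(D)|^2=n$, we get $|\chi(F)|^2=|\chi(D^{(t)})|^2|\chi(D)|^2=n^2$. Thus $|\chi(F)|^2=n^2$ for every $\chi$, which by Lemma \ref{lem:Fourier_inversion} is equivalent to the clean identity $FF^{(-1)}=n^2\cdot 1_G$; in coefficient form this says $\sum_g f_g=n$ and $\sum_g f_g^2=n^2$.

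The heart of the proof, and the step I expect to be the main obstacle, is the divisibility $n_1\mid F$ in $\Z[G]$, i.e. $n_1\mid f_g$ for all $g$; this is where the hypothesis on $t$ enters through algebraic number theory. Fix a non-principal $\chi$ of order $w\mid v^*$ and work in $\Z[\zeta_w]$. Writing $\sigma_t:\zeta_w\mapsto\zeta_w^t$, one checks $\chi(D^{(t)})=\sigma_t(\chi(D))$, so $\chi(F)=\sigma_t(\chi(D))\overline{\chi(D)}$ while $\chi(D)\overline{\chi(D)}=n$. For each prime $p\mid n_1$ the congruence $t\equiv p^{j_p}\pmod{v^*}$ says $\sigma_t=\sigma_p^{j_p}$ lies in the decomposition group $D(p)$ of Lemma \ref{lem:decomp}, so $\sigma_t$ fixes every prime $\mathfrak{p}$ above $p$ and hence preserves $\mathfrak{p}$-adic valuations. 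Therefore $v_{\mathfrak{p}}(\chi(F))=v_{\mathfrak{p}}(\chi(D))+v_{\mathfrak{p}}(\overline{\chi(D)})=v_{\mathfrak{p}}(n)\geq v_{\mathfrak{p}}(n_1)$ at every such $\mathfrak{p}$, while at primes not lying over any $p\mid n_1$ the integer $n_1$ is a unit. Since the prime divisors of $n_1$ are exactly these $p$ and $\gcd(n_1,v)=1$, this yields $n_1\mid\chi(F)$ for all $\chi$ (trivially also for $\chi_0$, as $n_1\mid n$). Feeding this into Fourier inversion gives $n_1\mid v f_g$, and $\gcd(n_1,v)=1$ then forces $n_1\mid f_g$.

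It remains to finish using positivity and the inequality $n_1>\lambda$. Writing $F=n_1F'$ with $F'\in\Z[G]$, the identity $FF^{(-1)}=n^2\cdot 1_G$ becomes $F'F'^{(-1)}=(n/n_1)^2\cdot 1_G$, so $\sum_g f'_g=n/n_1$ and $\sum_g (f'_g)^2=(n/n_1)^2$. Because $E=F+\lambda G$ has non-negative coefficients, $f_g\geq-\lambda$, hence $f'_g\geq-\lambda/n_1>-1$; as $f'_g\in\Z$ this means $f'_g\geq 0$. For non-negative integers the relation $\sum_g (f'_g)^2=(\sum_g f'_g)^2$ holds only when a single coefficient is nonzero, so $F'=(n/n_1)g_0$ for some $g_0\in G$, giving $F=ng_0$ and $E=\lambda G+ng_0$, as desired. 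I expect the valuation bookkeeping in the divisibility step to be the only genuinely delicate part; the positivity endgame, where $n_1>\lambda$ is used, is otherwise routine.
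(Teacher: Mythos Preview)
The paper does not supply its own proof of Theorem~\ref{thm:menon}: it is quoted as a classical result of Menon and left unproved, so there is nothing to compare against line by line. Your argument is correct and is in fact the standard proof; moreover it fits the paper's framework exactly, since your element $F=D^{(t)}D^{(-1)}-\lambda G$ is precisely the $X$ of Lemma~\ref{lem:F_mult}, and your endgame (non-negativity of $F'$ forces $F=ng_0$) is the implication $X\in\mathbb{N}[G]\Rightarrow X=ng$ recorded there. One small wording issue: in the divisibility step you write ``$\sigma_t=\sigma_p^{j_p}$ lies in the decomposition group $D(p)$ of Lemma~\ref{lem:decomp}''; strictly speaking what you need (and what Lemma~\ref{lem:decomp} gives) is that $\sigma_t\in D(p)$ because $t\equiv p^{j_p}\pmod{w}$, not that $\sigma_t$ literally equals a power of a Frobenius, but the conclusion that $\sigma_t$ fixes each prime above $p$ is unaffected.
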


The multiplier conjecture asserts that the condition $n_1>\lambda$ in Theorem \ref{thm:menon} is not needed. A method for weakening this condition is introduced in \cite{mcfarland1970multipliers} and developed further in \cite{leung2014multiplier}. The following simple observation is key behind the multiplier theorems of \cite{mcfarland1970multipliers} and \cite{leung2014multiplier}. 

\begin{lem}
	\label{lem:F_mult}
	Let $D$ be a $(v,k,\lambda)$ difference set and let $t$ be an integer relatively prime to $v$. Let $X=D^{(t)}D^{(-1)}-\lambda G$. Then $|X|=n$, $XX^{(-1)}=n^2$ and 
	$$X\in\mathbb{N}[G] \iff X=ng\text{ for some $g\in G$ } \iff t\in M(D).$$
\end{lem}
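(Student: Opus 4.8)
The plan is to first prove the two numerical identities $|X|=n$ and $XX^{(-1)}=n^2$ purely by group-ring manipulation, and then to play them off against the nonnegativity hypothesis. Throughout I will use that the augmentation $F\mapsto|F|$ is a ring homomorphism $\mathbb{Z}[G]\to\mathbb{Z}$, that $G$ is absorbing in the sense $FG=|F|G$, that $G^2=vG$, and that raising to a power coprime to $\exp(G)$ merely permutes the group elements, so $|D^{(t)}|=|D^{(-t)}|=k$ and $(DD^{(-1)})^{(t)}=DD^{(-1)}$. The identity $|X|=n$ is then immediate: $|X|=|D^{(t)}||D^{(-1)}|-\lambda|G|=k^2-\lambda v=n$, where $k^2=n+\lambda v$ follows from \eqref{eq:fundamental}.

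For $XX^{(-1)}$ I would first record $X^{(-1)}=DD^{(-t)}-\lambda G$ (using $G^{(-1)}=G$ and that $(\cdot)^{(-1)}$ reverses products) and then expand into four terms. The leading term $D^{(t)}D^{(-1)}DD^{(-t)}$ simplifies using \eqref{eq:defining} in the form $D^{(-1)}D=n+\lambda G$ together with its image $D^{(t)}D^{(-t)}=n+\lambda G$, while the three remaining terms collapse to multiples of $G$ via the absorbing property. Collecting coefficients, the $G$-part equals $\lambda(n-k^2+\lambda v)$, which vanishes by $k^2=n+\lambda v$, leaving $XX^{(-1)}=n^2$.

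The conceptual core is the implication $X\in\mathbb{N}[G]\Rightarrow X=ng$. Writing $X=\sum_g x_g g$ with each $x_g\geq 0$, the two identities translate into $\sum_g x_g=n$ and $\sum_g x_g^2=n^2$, the latter since the coefficient of the identity element in $XX^{(-1)}$ is $\sum_g x_g^2$ and $XX^{(-1)}=n^2$ forces this to equal $n^2$ while all other coefficients vanish. Then $n^2=\sum_g x_g^2\leq(\max_g x_g)\sum_g x_g=n\max_g x_g$ forces $\max_g x_g\geq n$, whereas nonnegativity together with $\sum_g x_g=n$ forces $\max_g x_g\leq n$; hence exactly one coefficient equals $n$ and the rest vanish, i.e.\ $X=ng$. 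The reverse implication $X=ng\Rightarrow X\in\mathbb{N}[G]$ is trivial since $n>0$.

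Finally I would link $X=ng$ to the multiplier condition. If $t\in M(D)$, say $D^{(t)}=Dg_0$, then $X=g_0\,DD^{(-1)}-\lambda G=g_0(n+\lambda G)-\lambda G=ng_0$ using $g_0G=G$. Conversely, if $X=ng$, then $D^{(t)}D^{(-1)}=ng+\lambda G$; multiplying on the right by $D$ and simplifying both sides with $D^{(-1)}D=n+\lambda G$ and $GD=kG$ yields $nD^{(t)}=n(gD)$, whence $D^{(t)}=Dg$ and $t\in M(D)$. The main obstacle is the careful bookkeeping in expanding $XX^{(-1)}$, where several cross terms must be reduced via the absorbing property of $G$ and the commutativity of the ring; once both identities are in hand, the inequality yielding $X=ng$ and the multiply-by-$D$ step are short.
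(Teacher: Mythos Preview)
Your proof is correct. The paper itself does not supply a proof of this lemma; it is stated as a ``simple observation'' and attributed to the multiplier-theorem literature (McFarland, Leung--Schmidt), so there is no argument to compare against. Your group-ring computations for $|X|=n$ and $XX^{(-1)}=n^2$ are clean, the extremal step from $\sum x_g=n$, $\sum x_g^2=n^2$, $x_g\ge 0$ to $X=ng$ is the standard one, and the multiply-by-$D$ trick to recover $D^{(t)}=gD$ from $D^{(t)}D^{(-1)}=ng+\lambda G$ is exactly how one closes the equivalence.
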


Thus, existence of multipliers can be proved by showing that the equation $XX^{(-1)}=n^2$ admits only trivial solutions over $\mathbb{Z}[G]$, \emph{i.e.}, solutions of the form $X=\pm ng$ for some $g\in G$. For small $n$, all solutions can be characterized explicitly. For general $n$, McFarland proved \cite{mcfarland1970multipliers} the following. Let $M'(n)$ be the number defined for $n\leq 4$ by setting
\begin{align}
	\label{eq:mf}
	M'(1)=1,\ M'(2)=2\cdot 7,\ M'(3)=2\cdot 3\cdot 11\cdot 13,\ M'(4)=2\cdot 3\cdot 7\cdot 31.
\end{align}
For $n\geq5$, let $p$ be a prime divisor of $n$ such that $p^e \parallel n$. Then define recursively
$$M'(n)=\text{rad}\left\{ n\cdot M'\left(\frac{n^2}{p^{2e}}\right)\cdot \prod_{j=1}^{\frac{n^2-n}{2}} (p^j-1) \right\}.$$

\begin{thm}[\cite{mcfarland1970multipliers}]
	\label{thm:m_function}
	Let $G$ be a finite abelian group and $n\geq 1$ an integer such that $\gcd(M'(n),|G|)=1$. Then the only solutions to $XX^{(-1)}=n^2$ for $X\in \mathbb{Z}[G]$ are the trivial ones.
\end{thm}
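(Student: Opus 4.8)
The plan is to translate the group-ring identity into a statement about cyclotomic integers and then to prove that statement by induction, following the recursion that defines $M'(n)$. Applying each character $\chi\in\widehat{G}$ to $XX^{(-1)}=n^2$ gives $\chi(X)\overline{\chi(X)}=n^2$, so every character value is a cyclotomic integer in $\Z[\zeta_{v^*}]$ of absolute value exactly $n$; the principal character gives $|X|=\pm n$, and after replacing $X$ by $-X$ we may assume $|X|=n$. By Lemma \ref{lem:Fourier_inversion} the element $X$ is determined by its character values, so it suffices to show that the hypothesis $\gcd(M'(n),|G|)=1$ forces every $\chi(X)$ to be $n$ times a root of unity and that these monomials are consistent with a single group element, giving $X=ng$.

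I would induct on the number $\omega(n)$ of distinct prime divisors of $n$, which is exactly the quantity the recursion decreases. For $\omega(n)\ge 2$, choosing $p$ with $p^e\parallel n$ and passing to $n^2/p^{2e}$ removes the prime $p$ entirely, so $\omega(n^2/p^{2e})=\omega(n)-1$, and the factor $M'(n^2/p^{2e})$ in the recursion is what lets the inductive hypothesis apply to the smaller instance. The base of the induction is $\omega(n)\le 1$, i.e.\ $n$ a prime power: the values $n\le 4$ are handled by the explicitly tabulated $M'(1),\dots,M'(4)$, which amount to the finite classification of cyclotomic integers of absolute value at most $4$ (the listed primes being precisely those that must be excluded from $|G|$ for that classification to collapse to monomials), while the prime powers $n=p^e\ge 5$ are settled by the argument below, for which $n^2/p^{2e}=1$ and $M'(1)=1$.

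The substantive step, and the heart of the whole argument, is the prime-power case. Since $\text{rad}(n)\mid M'(n)$, the hypothesis gives $p\nmid|G|$, so $p$ is unramified in $\Q(\zeta_{v^*})$ and $\mathbb{F}_p[G]$ is semisimple; moreover the coprimality of $\prod_{j=1}^{(n^2-n)/2}(p^j-1)$ with $|G|$ is equivalent to $\text{ord}_q(p)>(n^2-n)/2$ for every prime $q\mid v^*$, which by Lemma \ref{lem:decomp} means that $p$ splits into primes of very large residue degree. I would exploit this largeness through a reduction-modulo-$\mathfrak{p}$ argument: cyclotomic integers of absolute value $n$ form a sparse set, and once the residue field is large enough, reduction modulo a prime $\mathfrak{p}$ above $p$ separates them, which pins down the $\mathfrak{p}$-adic valuations of $\chi(X)$ forced by $\chi(X)\overline{\chi(X)}=n^2$ and isolates the contribution of $p$. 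For a prime power this already forces $\chi(X)=n\omega_\chi$ with $\omega_\chi$ a root of unity; for composite $n$ it produces the auxiliary instance governed by $n^2/p^{2e}$ to which induction is applied. Feeding the resulting values $\chi(X)=n\omega_\chi$ back through Lemma \ref{lem:Fourier_inversion}, and using $|X|=n$ together with the integrality of the coefficients, concentrates all the mass of $X$ on a single group element and yields $X=ng$.

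The main obstacle is the number-theoretic core just described: the lemma that a cyclotomic integer all of whose conjugates have absolute value $n$ must be a monomial once $p$ has sufficiently large order modulo every prime dividing $v^*$. Matching the threshold to exactly $(n^2-n)/2$, and making the reduction-modulo-$\mathfrak{p}$ separation uniform across all character orders dividing $v^*$ rather than within a single cyclotomic field, is the delicate part; the bookkeeping that converts the local valuation data at the primes above $p$ into the single global conclusion $X=\pm ng$ is where I expect the argument to need the most care.
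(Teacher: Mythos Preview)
Your proposal diverges from the paper's argument in a fundamental way. The paper (following McFarland and Leung--Schmidt) does not work with character values at all in the endgame: it reduces everything to Theorem~\ref{thm:mult_1}, which is a purely combinatorial statement about an element $X$ fixed by $g\mapsto g^t$. One writes $X=a_0+\sum_i a_iA_i$ over $t$-orbits and uses only the two scalar identities $a_0+\sum a_i|A_i|=n$ and $a_0^2+\sum a_i^2|A_i|=n^2$ together with the lower bound on orbit lengths supplied by $\mathrm{ord}_q(p)$. The threshold in $M'$ (and its refinements $M$, $\tilde M$) comes from these Diophantine constraints, not from any ``sparsity of cyclotomic integers under reduction mod $\mathfrak p$''. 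The passage to the smaller instance $n^2/p^{2e}$ is also not a character manipulation: one forms $Y=X^{(p)}X^{(-1)}$, observes that $v_{\mathfrak p}(\chi(Y))=v_{\mathfrak p}(\chi(X))+v_{\bar{\mathfrak p}}(\chi(X))=2e$ for every $\mathfrak p\mid p$ because $\sigma_p$ fixes each such $\mathfrak p$, deduces $p^{2e}\mid Y$, and applies induction to $Z=Y/p^{2e}$ with $ZZ^{(-1)}=(n^2/p^{2e})^2$; the conclusion $Z=\pm(n/p^e)^2g$ gives $X^{(p)}=gX$, after which the orbit count applies.

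The genuine gap in your plan is the claim that large residue degree forces $\chi(X)=n\omega_\chi$. From $\chi(X)\overline{\chi(X)}=n^2$ one only gets $v_{\mathfrak p}(\chi(X))+v_{\bar{\mathfrak p}}(\chi(X))=2e$, and when $\mathfrak p\neq\bar{\mathfrak p}$ (i.e.\ $p$ is not self-conjugate modulo the order of $\chi$) nothing in your sketch rules out an unbalanced split such as $v_{\mathfrak p}=0$, $v_{\bar{\mathfrak p}}=2e$; the hypothesis on $\mathrm{ord}_q(p)$ controls residue degrees, not the position of complex conjugation in the decomposition group. Your ``reduction mod $\mathfrak p$ separates them'' heuristic does not address this, and the subsequent step---feeding $\chi(X)=n\omega_\chi$ into Fourier inversion and invoking integrality to force $\omega_\chi=\chi(g)$---is precisely the conclusion $X=ng$ you are trying to prove, so it cannot serve as an independent final step. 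The $X^{(p)}X^{(-1)}$ device is exactly what repairs the unbalanced-valuation problem, and the orbit argument is what replaces the circular Fourier step.
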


The authors of \cite{leung2014multiplier} show that Theorem \ref{thm:m_function} remains valid when $M'(n)$ is replaced with a number $M(n)$ with much less prime divisors than $M'(n)$. It is defined as follows. Define $M(1,b)=1$ for all integers $b$, and for $n>1$, let
\begin{align*}
M(n,b)&=\text{rad}\left\{n\cdot M\left(\frac{n^2}{p^{2e}},\frac{2n^2}{p^{2e}}-2\right)\cdot\prod_{j=1}^{b}(p^j-1)\right\},
\end{align*}
for any prime divisor $p$ of $n$ with $p^e \parallel n$. Let $$M(n)=f(4n-1)M(n,2n-2),$$ where $f(m)=m$ if $m$ is prime and $f(m)=1$ otherwise. Then Theorem \ref{thm:m_function} remains valid with $M(n)$ in place of $\Tilde{M}(n)$.

This raises the question whether it is possible to go further. By considering a parity condition that the coefficients of a solution to $XX^{(-1)}=n^2$ must satisfy, we show next that it is indeed possible to reduce the number of prime divisors of $M(n)$. Define 
$$S_n=\{1,\dots,n-1\}\cup\{n\leq j\leq 2n-3: \text{ $j$ odd}\}.$$
Put $\tilde{M}(1,b)=1$ for all integers $b$, and for $n>1$, define $\Tilde{M}(n,b)$ recursively via
\begin{align}
\label{eq:mfunction_modified_def}
	\Tilde{M}(n,b)=\text{rad}\left\{n\cdot \Tilde{M}\left(\frac{n^2}{p^{2e}},\frac{2n^2}{p^{2e}}-3\right)\cdot \prod_{j\in S_n,j\leq b}(p^j-1)\right\},
\end{align}
where $p$ is any prime divisor of $n$ such that $p^e \parallel n$. We set $$\Tilde{M}(n)=f(4n-1)\tilde{M}(n,2n-3).$$
Table \ref{tab:M_comparison} compares the number of prime divisors of different $M$-functions for small $n$. Note that the value of an $M$-function depends on the order in which the prime divisors of the argument are selected for the recursion, but Theorem \ref{thm:m_function} holds no matter the order. 

\begin{table}[H]
\label{tab:M_comparison}
    \centering
    \begin{tabular}{|c|c|c|c|}
    \hline
        $n$ &  $\tau(M'(n))$ & $\tau(M(n))$ & $\tau(\Tilde{M}(n))$ \\
        \hline
        5 & 13& 11& 9\\
        6 & 17& 12&10\\
        7 & 33& 18& 13\\
        8 & 31& 14&10\\
        9 & 55& 18&13\\
        10 & 90& 25&18\\
        11 & 128& 28&21\\
        12 & 132& 31& 24\\
        \hline
    \end{tabular}
    \caption{Comparison of the number of prime divisors of different versions of McFarland's $M'$-function. Here, $\tau(m)$ is the number of prime divisors of $m$.}
    \label{tab:my_label}
\end{table}

Before giving the proof of Theorem \ref{thm:m_function} with $\tilde{M}$, we record some consequences of the new $M$-function. The next theorem is a variant of \cite[Theorem 3.1]{gordon2016survey}, the difference being that $M$ is replaced by $\tilde{M}$, and we allow the possibility that $\gcd(n_1,v)>1$. The proof is similar to the proof of \cite[Theorem 3.1]{gordon2016survey} so it is omitted. We let $\nu_p$ be the $p$-adic valuation.

\begin{thm}
	\label{thm:multiplier}
	Let $D$ be an abelian $(v,k,\lambda)$ difference set in a group $G$. Let $t$ be an integer relatively prime to $v$. Let $n_1$ be a divisor of $n$ such that for every prime divisor $p\mid n_1$, at least one of the following holds.
	\begin{enumerate}
		\item $p$ is self-conjugate modulo $v^*$;
		\item $t\equiv p^{j_p}\pmod{v^{*}/p^{\nu_p(v^{*})}}$ for some integer $j_p$.
	\end{enumerate}
	If 
	\begin{align*}
		\frac{n_1}{\gcd(n_1,v)}>\lambda \text{ or } \gcd\left(v,\tilde{M}\left(\frac{n \gcd(n_1,v)}{n_1},\left\lfloor\frac{k \gcd(n_1,v)}{n_1}\right\rfloor\right)\right)=1,
	\end{align*}
    then $t$ is a multiplier of $D$.
\end{thm}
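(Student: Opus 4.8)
The plan is to reduce Theorem \ref{thm:multiplier} to an application of Theorem \ref{thm:m_function} with the modified function $\tilde M$ in place of $M'$, by way of Lemma \ref{lem:F_mult}. Recall that $t$ is a multiplier of $D$ precisely when the group ring element $X=D^{(t)}D^{(-1)}-\lambda G$ satisfies $X=ng$ for some $g\in G$, and that $X$ automatically obeys $|X|=n$ and $XX^{(-1)}=n^2$. So the whole task is to show, under the stated hypotheses, that the only solutions to $XX^{(-1)}=n^2$ that can actually arise from this construction are the trivial ones $X=\pm ng$. The two alternative sufficient conditions in the hypothesis correspond to two genuinely different mechanisms, and I would treat them separately.

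First I would dispose of the easy branch $\tfrac{n_1}{\gcd(n_1,v)}>\lambda$. This is the direct analogue of Menon's condition $n_1>\lambda$ in Theorem \ref{thm:menon}, adapted to allow $\gcd(n_1,v)>1$. Here the self-conjugacy or congruence hypotheses on each prime $p\mid n_1$ guarantee, via Lemma \ref{lem:decomp} on decomposition groups in cyclotomic fields, that the Galois automorphism $\sigma_t$ fixes every prime ideal above $p$, so that $\chi(D^{(t)})$ and $\chi(D)$ are associates for every character $\chi$ and every $p\mid n_1$. Combined with $n_1\mid n$ this forces $n_1$ to divide all the coefficients of $X$ after passing to the quotient that kills the part of $v$ sharing factors with $n_1$; the inequality $\tfrac{n_1}{\gcd(n_1,v)}>\lambda$ then pins the nonnegative coefficients down so tightly that $X\in\mathbb N[G]$, whence $X=ng$ by Lemma \ref{lem:F_mult}. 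This is the classical argument reproduced essentially verbatim from the proof of \cite[Theorem 3.1]{gordon2016survey}, so I would be brief.

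The substantive branch is the second one, where instead of a counting inequality we invoke the arithmetic criterion $\gcd\bigl(v,\tilde M\bigl(\tfrac{n\gcd(n_1,v)}{n_1},\lfloor \tfrac{k\gcd(n_1,v)}{n_1}\rfloor\bigr)\bigr)=1$. The idea is to factor out of $X$ the contribution of the primes dividing $n_1$: writing $d=\gcd(n_1,v)$, the self-conjugacy/congruence hypotheses let me extract an ideal factor and reduce the equation $XX^{(-1)}=n^2$ to an equation $YY^{(-1)}=(nd/n_1)^2$ for a related element $Y$, whose relevant bound on coefficient support is governed by $\lfloor kd/n_1\rfloor$. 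At this point Theorem \ref{thm:m_function}, which I may assume holds with $\tilde M$ in place of $M'$ (this is exactly the replacement the paper establishes and will prove afterward), guarantees that $Y$ is trivial whenever $\gcd(v,\tilde M(\cdot,\cdot))=1$, and triviality of $Y$ pulls back to triviality of $X$, i.e. to $t\in M(D)$.

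I expect the main obstacle to be the bookkeeping in the second branch: correctly identifying which quotient of $G$ one descends to, verifying that the reduced element $Y$ has the coefficient-support bound $\lfloor kd/n_1\rfloor$ that feeds the second argument of $\tilde M$, and checking that the self-conjugacy and congruence conditions are exactly what is needed for the ideal-factoring step to go through when $\gcd(n_1,v)>1$ (the case the cited theorem does not handle). Since the paper explicitly states that the proof parallels \cite[Theorem 3.1]{gordon2016survey} and is omitted, my proposal is to follow that template, inserting the $\gcd(n_1,v)$ factors uniformly and substituting $\tilde M$ for $M$ throughout; the only points requiring genuine care are that $\tilde M$ may legitimately replace $M'$ in Theorem \ref{thm:m_function} (granted) and that the relaxation $\gcd(n_1,v)>1$ does not break the divisibility step, which the normalization by $\gcd(n_1,v)$ in both disjuncts of the hypothesis is designed precisely to repair.
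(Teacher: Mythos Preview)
Your proposal is correct and matches the paper's intended approach: the paper explicitly omits the proof, stating only that it parallels \cite[Theorem 3.1]{gordon2016survey} with $\tilde M$ substituted for $M$ and the relaxation to $\gcd(n_1,v)>1$, which is exactly the template you lay out via Lemma \ref{lem:F_mult} and the two branches. There is nothing further to compare, since the paper provides no argument of its own beyond the pointer you have already followed.
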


Table \ref{tab:difsetss} lists difference sets that are ruled out by Theorems \ref{thm:forbidden} and \ref{thm:multiplier}; we use the database \cite{repo} as a standard reference for open cases of abelian difference sets. For these parameter tuples the condition $\frac{n_1}{\gcd(n_1,v)}>\lambda$ of Theorem \ref{thm:multiplier} with $n_1=n$ suffices. The value of the forbidden multiplier $t$ is shown in the table.

\begin{table}[H]
	\centering
	\begin{tabular}{ l r r }
		$(v,k,\lambda)$ & Group & $t$\\
		\hline 
		$(343,171,85)$ & $C_{7}\times C_{49}$ & $\frac{\text{exp}(G)}{7}+1$\\ 
		$(768,118,18)$ & $C_4\times C_{192}$, $C_8\times C_{96}$ & $\frac{\text{exp}(G)}{2}+1$\\
		$(5859,203,7)$ & $C_{5859}$ & $\frac{\text{exp}(G)}{3}+1$\\
		$(675,337,168)$ & $C_5\times C_{135}$ & $\frac{\text{exp}(G)}{3}+1$\\
		$(783,391,
        195)$ & $C_3\times C_{261}$ & $\frac{\text{exp}(G)}{3}+1$
	\end{tabular}
\caption{Difference sets ruled out by Theorems \ref{thm:forbidden} and \ref{thm:multiplier}.}\label{tab:difsetss}
\end{table}

\subsection{Proofs}

In order to prove Theorem \ref{thm:m_function} with $\tilde{M}$, it will be enough to prove the next theorem, since then Theorem \ref{thm:m_function} follows exactly as Theorem 3.2 in \cite{leung2014multiplier} follows from Theorem 3.3 and Theorem 3.4 in \cite{leung2014multiplier}. Throughout, $G$ denotes a finite abelian group and $t$ a fixed integer relatively prime to $|G|$.

\begin{thm}
    \label{thm:mult_1}
    Let $X\in \mathbb{Z}[G]$ satisfy $X^{(t)}=X$, $|X|=n$ and $XX^{(-1)}=n^2$ for some integer $n\geq 1$. Let $a_0$ be the coefficient of the identity of $X$. Suppose that for every prime divisor $q\mid |G|$ at least one of the following is true:
    \begin{enumerate}
        \item $\text{ord}_q(t)>\max\{n,n-a_0\}$;
        \item $\text{ord}_q(t)\geq n$ and $\text{ord}_q(t)$ is even. 
    \end{enumerate}
    Then $X=n$.
\end{thm}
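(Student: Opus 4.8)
The plan is to translate the three algebraic hypotheses into statements about character values and then argue by induction on $|G|$. Applying an arbitrary character $\chi\in\widehat{G}$ to $XX^{(-1)}=n^2$ gives $|\chi(X)|^2=n^2$, so every character value has absolute value $n$, while the principal character gives $\chi(X)=|X|=n$. Writing $\sigma_t$ for the Galois automorphism $\zeta\mapsto\zeta^t$ of $\Q(\zeta_{\text{exp}(G)})$, the relation $X^{(t)}=X$ yields $\sigma_t(\chi(X))=\chi(X^{(t)})=\chi(X)$, so each $\chi(X)$ lies in the fixed field of $\langle\sigma_t\rangle$. Since $X=n$ is equivalent to $\chi(X)=n$ for every $\chi$, the goal becomes to rule out any character with $\chi(X)\neq n$.

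Next I would set up the induction. The base case $|G|=1$ is immediate, as then $X=a_0=n$. For the inductive step, fix a prime $q\mid|G|$ and a subgroup $U\cong C_q$, and apply $\rho:\Z[G]\to\Z[G/U]$. The image $Y=\rho(X)$ again satisfies $Y^{(t)}=Y$, $|Y|=n$ and $YY^{(-1)}=n^2$, and the order hypotheses are inherited by $G/U$ because $\text{ord}_q(t)$ depends only on $q$ (the even case transferring directly, the large-order case requiring care with the identity-coefficient term, which I would track through the projection). By induction $\rho(X)=n$, which pins down the coset sums of $X$: writing $X=\sum_i g_iX_i$ with $X_i\in\Z[U]$ and $g_0=1$, one gets $|X_0|=n$, $|X_i|=0$ for $i\neq 0$, and contracting $XX^{(-1)}=n^2$ to $U$ gives $\sum_i X_iX_i^{(-1)}=n^2$ in $\Z[U]$. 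It then remains to upgrade these sum conditions to $X_0=n$ and $X_i=0$, which is exactly the content carried by the characters that are non-principal on $U$.

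The crux is the analysis of these residual character values, and this is where the two hypotheses on $\text{ord}_q(t)$ enter. For a character $\chi$ of order $q$, the value $\chi(X)$ is a cyclotomic integer of absolute value $n$ in $\Z[\zeta_q]$ fixed by $\sigma_t$, whose order on $\Q(\zeta_q)$ is $f:=\text{ord}_q(t)$. In the even case, $f$ even forces $t^{f/2}\equiv-1\pmod q$, so $\sigma_t^{f/2}$ is complex conjugation; being $\sigma_t$-fixed, $\chi(X)$ is real and hence $\chi(X)=\pm n$. In the large-order case one instead bounds the off-identity mass $n-a_0=\sum_{g\neq 1}a_g$ against the orbit length of $\sigma_t$: the $t$-orbits of non-principal characters all have size $f$, and a norm/counting estimate shows that a $\sigma_t$-fixed value of absolute value $n$ cannot differ from $n$ once $f>\max\{n,n-a_0\}$. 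Feeding the forced values back through Fourier inversion (Lemma \ref{lem:Fourier_inversion}) over the $q$-part then recovers the coefficients of the $X_i$ and yields $X=n$.

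I expect the main obstacle to be the sharp form of this residual estimate, and in particular the parity refinement that permits the weaker bound $\text{ord}_q(t)\geq n$ in the even case in place of the strict, $a_0$-shifted bound $\text{ord}_q(t)>\max\{n,n-a_0\}$. Once the order-$q$ character values are pinned to $\pm n$, the integer coefficients produced by Fourier inversion are constrained to an arithmetic progression, and the key point is that the admissible solutions jump in steps of size two; this parity lets one save a factor in the orbit bound and drop the $n-a_0$ correction. This is precisely the phenomenon recorded by the odd-$j$ restriction in $S_n$ and the range $2n-3$ in the definition of $\tilde M$, and making it rigorous — rather than the routine inductive bookkeeping — is the heart of the argument.
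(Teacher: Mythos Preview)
Your proposal takes a different route from the paper and has a real gap. The paper does not argue via character values at all: it works directly with the orbit decomposition $X=a_0+\sum_i a_iA_i$ under $g\mapsto g^t$, and from $|X|=n$ together with the identity coefficient of $XX^{(-1)}=n^2$ extracts the two scalar equations
\[
a_0+\sum_i a_i|A_i|=n,\qquad a_0^2+\sum_i a_i^2|A_i|=n^2.
\]
Since every $|A_i|$ is a multiple of some $\operatorname{ord}_q(t)$, setting $x=\gcd_q\operatorname{ord}_q(t)$ gives $n-a_0=xy$ and $n^2-a_0^2=x\tilde y$ with $y\equiv\tilde y\pmod 2$ (because $a_i\equiv a_i^2\pmod 2$). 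Condition~1 then forces $0<y<1$; condition~2 forces $y\in\{0,1\}$, and the parity $y\equiv\tilde y$ together with $x$ even rules out $y=1$. That is the whole content of the key Lemma~\ref{lem:trivial2}. The induction is on the number of prime divisors of $|G|$ (projecting out a full Sylow subgroup, not a single $C_q$), and it leans on the external input $a_0\le 0$ recorded in Remark~\ref{rem:non-neg} to control how the identity coefficient shifts under projection.

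Your gap is the large-order case. The assertion that ``a $\sigma_t$-fixed value of absolute value $n$ cannot differ from $n$ once $f>\max\{n,n-a_0\}$'' is false as a statement about cyclotomic integers: $-n$ is always such a value, regardless of $f$. What the hypothesis $f>\max\{n,n-a_0\}$ actually constrains is not any individual $\chi(X)$ but the global quantity $n-a_0=\sum_{g\ne1}a_g$, through the orbit lengths of elements of $G$ (not of characters). Your even-case observation that $\chi(X)\in\{\pm n\}$ for order-$q$ characters is correct, but pushing those signs back through Fourier inversion does not by itself yield the parity saving without re-deriving the coefficient equations above. Finally, your induction needs condition~1 to hold for the \emph{new} identity coefficient $a_0+z\cdot\operatorname{ord}_q(t)$ of $\rho(X)$; you flag this but do not handle it, and in the paper it is precisely here that the reduction to $a_0\le 0$ and a delicate case split are required, so this step is not routine bookkeeping.
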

\begin{rem}\label{rem:non-neg}
    It follows implicitly from the proof of Theorem 3.3 in \cite{leung2014multiplier} that if $X\in \mathbb{Z}[G]$ is a non-trivial solution satisfying the assumptions of Theorem \ref{thm:mult_1}, then $\text{ord}_q(t)\geq n$ for all prime divisors $q\mid |G|$ implies that $a_0\leq 0$.
\end{rem}
We need a couple of lemmas to obtain this theorem.
\begin{lem}
    \label{lem:trivial}
    The length of the orbit $g\mapsto g^t$, where $g\in G\setminus\{1\}$, is divisible by $\text{ord}_q(t)$ for some prime divisor $q\mid |G|$.
\end{lem}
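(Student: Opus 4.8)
The plan is to compute the orbit length explicitly in terms of a multiplicative order, and then compare multiplicative orders under reduction modulo a prime. First I would observe that since $\gcd(t,|G|)=1$, the map $\phi\colon x\mapsto x^t$ is an automorphism of $G$, and its iterates satisfy $\phi^\ell(g)=g^{t^\ell}$. Writing $d=\text{ord}(g)$, the orbit of $g$ closes up precisely when $g^{t^\ell}=g$, that is, when $t^\ell\equiv 1\pmod d$. Hence the orbit length $\ell$ equals $\text{ord}_d(t)$, the multiplicative order of $t$ modulo $d$; this uses that $t$ is a unit modulo $d$, which holds because $d\mid |G|$ by Lagrange's theorem and $\gcd(t,|G|)=1$.

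Next, since $g\neq 1$ we have $d>1$, so $d$ admits a prime divisor $q$. Again $d\mid |G|$, so $q$ is a prime divisor of $|G|$, and $\gcd(t,q)=1$, so that $\text{ord}_q(t)$ is defined. The remaining step is to show $\text{ord}_q(t)\mid \text{ord}_d(t)$. For this I would use the reduction homomorphism $(\mathbb{Z}/d\mathbb{Z})^\times\to(\mathbb{Z}/q\mathbb{Z})^\times$ induced by $q\mid d$, which sends the class of $t$ to the class of $t$. Since a group homomorphism carries an element of order $\text{ord}_d(t)$ to an element whose order divides $\text{ord}_d(t)$, we obtain $\text{ord}_q(t)\mid \text{ord}_d(t)=\ell$, which is exactly the assertion of the lemma.

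The argument is essentially routine; the only places demanding a little care are the identification of the orbit length with $\text{ord}_d(t)$ and the bookkeeping ensuring that the relevant multiplicative orders are well-defined, i.e., that $t$ is invertible modulo both $d$ and $q$. I do not anticipate a genuine obstacle. If one preferred to avoid the language of reduction maps, the divisibility $\text{ord}_q(t)\mid \text{ord}_d(t)$ can be obtained directly: the congruence $t^{\text{ord}_d(t)}\equiv 1\pmod d$ forces $t^{\text{ord}_d(t)}\equiv 1\pmod q$ because $q\mid d$, whence $\text{ord}_q(t)$ divides $\text{ord}_d(t)$ by the defining minimality property of the multiplicative order.
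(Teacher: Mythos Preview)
Your argument is correct. The paper does not actually supply a proof of this lemma; it is stated without proof (the label \texttt{lem:trivial} already signals that the authors regard it as elementary), so there is nothing to compare against beyond noting that your computation---identifying the orbit length with $\text{ord}_d(t)$ for $d=\text{ord}(g)$ and then reducing modulo a prime divisor $q$ of $d$---is exactly the routine verification one would expect.
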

We can now prove Theorem \ref{thm:mult_1} under a weaker assumption.

\begin{lem}\label{lem:trivial2}
    Let $X\in\mathbb{Z}[G]$ be as in Theorem \ref{thm:mult_1}, and suppose that $x=\gcd(\text{ord}_q(t): q\mid |G|)$ satisfies condition 1. or condition 2. of that theorem. Then $X=n$.
\end{lem}
\begin{proof}
    Suppose $X\neq n$ and write $X=a_0+\sum_{i=1}^\ell a_i A_i$, where $A_i$ are disjoint non-identity orbits of $g\mapsto g^t$ and $a_i$ are integers. Then $|X|=n$ and $XX^{(-1)}=n^2$ imply 
    \begin{align}
        a_0+\sum_{i=1}^\ell a_i |A_i|=n, \quad a_0^2+\sum_{i=1}^\ell a_i^2 |A_i|=n^2.
    \end{align}
    By Lemma \ref{lem:trivial2}, we can write $\sum_{i=1}^\ell a_i |A_i|=xy$ and $\sum_{i=1}^\ell a_i^2 |A_i|=x\tilde{y}$ for some integers $y,\tilde{y}$ with $y\equiv \tilde{y} \pmod{2}$. If condition 1. holds, then $0<y=\frac{n-a_0}{x}<1$ or $y=0$, a contradiction. On the other hand, if condition 2. holds then similarly $y\in\{0,1\}$. But the equations $a_0+xy=n$ and $a_0^2+x\tilde{y}=n^2$ and the assumption $x\equiv 0\pmod{2}$ easily imply $\tilde{y}\equiv 0\pmod{2}$ whence $y=0$, a contradiction.
\end{proof}
The general case is obtained by performing a simple induction on the number of prime divisors of $|G|$.
\begin{proof}[Proof of Theorem \ref{thm:mult_1}]
     The proof is by induction on the number $r$ of prime divisors of $|G|$. If $r=0$, there is nothing to prove, so suppose $G=\prod_{i=1}^r G_i$, where $G_i$ is the Sylow $q_i$-subgroup of $G$ and $r\geq 1$. Suppose $X\neq n$ is a solution in $\mathbb{Z}[G]$ satisfying the assumptions of the theorem and suppose that the theorem holds for all abelian groups whose order has less than $r$ prime divisors. By Remark \ref{rem:non-neg} we may suppose that $a_0\leq 0$. Let $\rho_i:\mathbb{Z}[G]\to\mathbb{Z}[\tilde{G_i}]$ be the canonical epimorphism, where $\tilde{G_i}=G_1\times\cdots \times G_{i-1}\times G_{i+1}\times\cdots\times G_r$. Write $X=a_0+X_i+Y_i$, where the support of $X_i$ is contained in $\ker(\rho_i)\setminus\{1\}$ and the support of $Y_i$ is disjoint from $\ker(\rho_i)$. Then the assumption $X^{(t)}=X$ implies that $X_i^{(t)}=X_i$, and thus, by Lemma \ref{lem:trivial},
    \begin{align}
        \rho_i(X)=(a_0+z_i \text{ord}_{q_i}(t))+\rho_i(Y_i)\quad\text{for some $z_i\in \mathbb{Z}$, $\forall i=1,\dots,r$.}
    \end{align}
    Note that $|a_0+z_i\text{ord}_{q_i}(t)|\leq n$ for all $i$. Thus, if there exists an $i$ such that $\text{ord}_{q_i}(t)>n-a_0$, then $z_i=0$ and the coefficient of the identity of $\rho_i(X)$ is $a_0$. In particular, $\rho_i(X)$ satisfies the assumptions of Theorem \ref{thm:mult_1}, so by induction, $a_0=n$, a contradiction. Therefore, $\text{ord}_{q_i}(t)\geq n$ is an even integer for all $i$. But then $\rho_i(X)$ satisfies the assumptions of Theorem \ref{thm:mult_1}, which means that $a_0+z_i\text{ord}_{q_i}(t)=n$ for all $i$. Then the only possibility is $z_i=1$ for all $i$, or $\text{ord}_{q_1}(t)=\cdots=\text{ord}_{q_r}(t)$. In this case, Lemma \ref{lem:trivial2} shows that $X=n$.
\end{proof}

\bibliographystyle{plain}

\bibliography{refs}

\end{document}